\newcommand{\F}{\mathcal{F}}
\newcommand{\E}{\mathcal{E}}
\newcommand{\V}{\mathcal{V}}
\newcommand{\N}{\mathcal{N}}
\newcommand{\HH}{\mathcal{H}}
\newcommand{\R}{\mathbb{R}}
\newtheorem{definition}{Definition}
\newtheorem{lemma}{Lemma}
\newtheorem{theorem}{Theorem}
\newtheorem{remark}{Remark}
\title{A BDDC Preconditioner for the Cardiac EMI Model in Three Dimensions}
\author[1,*]{Fritz Göbel}
\author[2]{Ngoc Mai Monica Huynh}
\author[3]{Fatemeh Chegini}
\author[2]{Luca Franco Pavarino}
\author[3]{Martin Weiser}
\author[4]{Simone Scacchi}
\author[1]{Hartwig Anzt}
\affil[1]{School of Computation, Information and Technology, Technical University of Munich, Bildungscampus 2, 74076 Heilbronn, Germany.}
\affil[2]{Dipartimento di Matematica, Universita` degli Studi di Pavia, Via Ferrata, 27100 Pavia, Italy.}
\affil[3]{Zuse Institute Berlin, Takustraße 7, 14195 Berlin, Germany.}
\affil[4]{Dipartimento di Matematica, Universita` degli Studi di Milano, Via Saldini 50, 20133 Milano, Italy.}
\affil[*]{Corresponding author: Fritz Göbel, fritz.goebel@tum.de}
\date{\today}
\begin{document}

\maketitle

\begin{abstract}
We analyze a Balancing Domain Decomposition by Constraints (BDDC) preconditioner for the solution of three dimensional composite Discontinuous Galerkin discretizations of reaction-diffusion systems of ordinary and partial differential equations arising in cardiac cell-by-cell models like the Extracellular space, Membrane and Intracellular space (EMI) Model. These microscopic models are essential for the understanding of events in aging and structurally diseased hearts which macroscopic models relying on homogenized descriptions of the cardiac tissue, like Monodomain and Bidomain models, fail to adequately represent. The modeling of each individual cardiac cell results in discontinuous global solutions across cell boundaries, requiring the careful construction of dual and primal spaces for the BDDC preconditioner. We provide a scalable condition number bound for the precondition operator and validate the theoretical results with extensive numerical experiments.
\end{abstract}

\section{Introduction}

This work aims at generalizing a previous convergence analysis of BDDC preconditioners for cardiac cell-by-cell models \cite{2D-proof} to three spatial dimensions. Such models on the microscopic level have been studied over the past years in order to understand events in aging and structurally diseased hearts which macroscopic Monodomain and Bidomain models \cite{potse2006comparison, bidomain-1, bidomain-2} that are based on a homogenized description of the cardiac tissue fail to adequately represent. In such events, reduced electrical coupling leads to large differences in behavior between neighboring cells, which requires careful modeling of each individual cardiac cell.

We consider the EMI (Extracellular space, cell Membrane and Intracellular space) model, which has been introduced and analyzed in several works, such as \cite{potse2017cinc, EMI-3, EMI-2, EMI-1, EMI-4, EMI-5} and has been at the core of the EuroHPC project MICROCARD (website: https://cordis.europa.eu/project/id/955495). Throughout the course of this project, Balancing Domain Decomposition by Constraints (BDDC) preconditioners (we refer to \cite{pechstein2017bddc, dd-book} for extensive explanations of such algorithms) have been identified to be an efficient choice for composite Discontinuous Galerkin (DG) type discretizations of cardiac cell-by-cell models and a theoretical analysis of this method has been presented in \cite{2D-proof}, while a preliminary study on its integration with time-stepping methods can be found in \cite{chegini2023coupled}. Here, a key step was the careful construction of extended dual and primal spaces for the degrees of freedom, allowing for continuous mapping in the BDDC splitting while still honoring the discontinuities across cell boundaries as they occur in the EMI model. Up to now, the theoretical analysis in three dimensions remains an open question, as the introduction of edge terms into the primal space requires additional constraints. In this work, we leverage results from \cite{sarkis-3D} as well as standard sub-structuring theory arguments from \cite{dd-book} in order to close this gap. Other paths for the solution of EMI models have been investigated, ranging from boundary element methods \cite{bem2024} as discretization choice, to multigrd solvers \cite{budivsa2024algebraic}, overlapping Schwarz preconditioners \cite{huynh2025gdsw} and other iterative solvers built upon ad-hoc spectral analysis \cite{benedusi2024modeling}. Other approaches to domain decomposition preconditioning for DG type problems have been studied e.g. in \cite{antonietti2007esaim} and \cite{ayuso2014multilevel}.

For the sake of completeness, we will first give an overview of a simplified EMI model and its time and space discretizations in \Cref{sec:emi}, referring to \cite{EMI-1, EMI-4, EMI-5} for details on the EMI model and to \cite{2D-proof} for the derivation of the time and space composite-DG discretizations. We introduce the finite element spaces, describe the BDDC preconditioner for this application, and recall the Lemmas from literature necessary to prove its convergence in \Cref{spaces} before providing a convergence proof, the main contribution of this paper, in \Cref{sec:proof}. Finally, \Cref{results} presents a numerical study supporting the derived theory with practical results.

\section{The EMI model} \label{sec:emi}
As in \cite{2D-proof}, we consider $N$ myocytes immersed in extracellular liquid, together forming the cardiac tissue $\Omega$ while focusing on the case that $\Omega \subset \R^3$. We assign the extracellular subdomain to be $\Omega_0$ and each of the $N$ cardiac cells to be separate subdomains $\Omega_1,\dots,\Omega_N$, interacting with the surrounding extracellular space through ionic currents and with their neighboring myocytes via gap junctions, which are special protein channels allowing for the passage of ions directly between two cells \cite{ionic}. We assume a partition of the cardiac tissue $\Omega$ into $N + 1$ non-overlapping subdomains $\Omega_i$ such that $\overline \Omega = \cup_{i = 0}^N \overline \Omega_i$, $\Omega_i \cap \Omega_j = \emptyset$ if $i \neq j$.
A simplified geometry with two idealized cells is represented in Figure \ref{fig:two_cells}.

\begin{figure}
    \centering
    \includegraphics[width=0.5\textwidth]{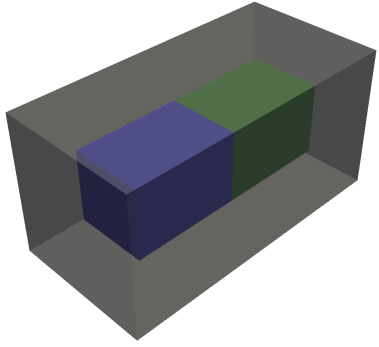}
    \caption{Visualization of two cells $\Omega_1$ (green) and $\Omega_2$ (blue) floating in extracellular liquid $\Omega_0$ (grey). For this three-dimensional example, we have to consider interface terms for ionic currents between extracellular space and the cells over $F_{01} = \partial\Omega_0 \cap \partial\Omega_1$ and $F_{02} = \partial \Omega_0 \cap \partial \Omega_2$ and ionic currents through gap junctions between the cells over $F_{12} = \partial\Omega_1\cap\partial\Omega_2$. For the BDDC preconditioner, it is important to also consider edge terms on $E_{0, \{1, 2\}} = \partial\Omega_0\cap\partial\Omega_1\cap\partial\Omega_2$.}
    \label{fig:two_cells}
\end{figure}

The EMI model is described by the equations
\begin{equation}
\label{eq:emi}
    \begin{cases}
        -\text{div}(\sigma_i \nabla u_i) = 0 & \text{in } \Omega_i, i = 0,\dots,N \\
        -n_i^T\sigma_i\nabla u_i = C_m\frac{\partial v_{ij}}{\partial t} + F(v_{ij}, c, w) & \text{on } F_{ij} = \partial\Omega_i \cap \partial\Omega_j, i \neq j \\
        n^T\sigma_i\nabla u_i = 0 & \text{on } \partial\Omega_i \cap \partial\Omega \\
        \frac{\partial c}{\partial t}-C(v_{ij}, w, c) = 0, \quad \frac{\partial w}{\partial t}-R(v_{ij},w)=0&
    \end{cases}
\end{equation}
with conductivity coefficients $\sigma_i$ in $\Omega_i$, outward normals $n_i$ on $\partial \Omega_i$ and membrane capacitance for unit area $C_m$ on the membrane surface. $v_{ij} = u_i - u_j$ describes the transmembrane voltage, i.e. the discontinuities of the electric potentials between two neighboring subdomains and $F(v_{ij}, c, w)$ stands for either ionic current $I_{\text{ion}}(v_{ij},c,w)$ or gap junction current $G(v_{ij})$, depending on whether the two neighbors are both cells or one is the extracellular domain. We assume $G(v)$ to be linear in the potential jumps $v$ and note that generally, $F(v_{ij}, c, w) = - F(v_{ji}, c, w)$. The last two terms model the ion flow with ordinary differential equations describing the time evolution of ion concentrations $c$ and gating variables $w$. More details on derivation and analysis of the EMI model can be found in \cite{EMI-1, EMI-4, veneroni2006micro}.

We note that since the solution of (\ref{eq:emi}) is only unique up to a constant, we require a zero average on the extracellular solution $u_0$. Additionally, we mention that we consider a splitting strategy in time for the solution of (\ref{eq:emi}), first solving the ionic model with jumps $v_{ij}$ known from the previous time step and then updating the model with the solutions $c$ and $w$ and solving it for the electric potential. For brevity, we will from now on write $F(v_{ij}) \coloneqq F(v_{ij}, c, w)$, omitting $c$ and $w$.

\subsection{Weak Formulation}

On each subdomain $\Omega_i$, integrating the first equations in (\ref{eq:emi}) by parts and substituting in the second equation on the cell membrane, the $i$-th sub-problem reads: find $u_i \in H^1(\Omega_i)$ such that for all $\phi_i \in H^1(\Omega_i)$ the following holds:
\begin{equation*}
    \begin{split}
        0 & = -\int_{\Omega_i}\text{div}(\sigma_i\nabla u_i) \phi_idx \\
        & = \int_{\Omega_i}\sigma_i\nabla u_i \nabla\phi_idx - \int_{\partial \Omega_i} n_i^T\sigma_i\nabla u_i \phi_ids \\
        & = \int_{\Omega_i}\sigma_i\nabla u_i \nabla\phi_idx + \sum_{i \neq j}\int_{F_{ij}} \big(C_m\frac{\partial v_{ij}}{\partial t} + F(v_{ij})\big)\phi_ids.
    \end{split}
\end{equation*} 

Summing up the contributions of all subdomains, we get the global problem
\[\sum_{i = 0}^N\int_{\Omega_i}\sigma_i\nabla u_i\nabla\phi_idx + \frac{1}{2} \sum_{i = 0}^N\sum_{j \neq i}\int_{F_{ij}}\big(C_m\frac{\partial[\![u]\!]_{ij}}{\partial t} + F([\![u]\!]_{ij})\big)[\![\phi]\!]_{ij}ds = 0,\]
where $[\![u]\!]_{ij} = u_i - u_j$ denotes the jump in value of the electric potential $u_i$ and its neighboring $u_j$ from the subdomain $\Omega_j$ along the boundary face $F_{ij}\subset \partial\Omega_i$.

\subsection{Space and Time Discretization}

Let $V_i(\overline{\Omega}_i)$ be the regular finite element space of piece-wise linear functions in $\overline{\Omega}_i$ and define the global finite element space as $V(\Omega) \coloneqq V_0(\overline{\Omega}_0)\times\cdots\times V_N(\overline{\Omega}_N)$. Similar as in \cite{sarkis-3D} we denote:
\begin{itemize}
    \item The \textbf{subdomain face} shared between subdomains $\Omega_i$ and $\Omega_j$ is symbolized by $\overline{F}_{ij} \coloneqq \partial \Omega_i \cap \partial \Omega_j$. Conversely, the subdomain face shared between subdomains $\Omega_j$ and $\Omega_i$ is symbolized by $\overline{F}_{ji} \coloneqq \partial \Omega_j \cap \partial \Omega_i$. We note that geometrically speaking, $\overline{F}_{ij}$ and $\overline{F}_{ji}$ are identical, but to allow for different triangulations on either subdomain \cite{sarkis-3D}, we treat them separately.
    \item $\F_i^0$ describes the set of indices $j$ for which $\Omega_i$ and $\Omega_j$ share a face $F_{ij}$ with non-vanishing two-dimensional measure.
    \item $\N_x$ refers to the set of indexes of subdomains with $x$ in the closure of the subdomain.
\end{itemize}
Just like in \cite{2D-proof}, we consider an implicit-explicit (IMEX) time discretization scheme, treating the diffusion term implicitly and the reaction term explicitly. We split the time interval $[0,T]$ into $K$ intervals. With $\tau = t^{k + 1} - t^k, k = 0,\dots,K$ we derive the following scheme:
\begin{equation*}
    \begin{split}
        \frac{1}{2}\sum_{i = 0}^N\sum_{j \in \F_i^0}\int_{F_{ij}}C_m\frac{[\![u^{k + 1}]\!]_{ij} - [\![u^k]\!]_{ij}}{\tau}[\![\phi]\!]_{ij}ds + \sum_{i = 0}^N\int_{\Omega_i}\sigma_i\nabla u_i^{k+1}\nabla\phi_idx \\
        =-\frac{1}{2}\sum_{i=0}^N\sum_{j \in \F_i^0}\int_{F_{ij}}F([\![u^k]\!]_{ij})[\![\phi]\!]_{ij}ds.
    \end{split}
\end{equation*}

Rearranging the terms such that we only have $[\![u^{k+1}]\!]_{ij}$ and $u_i^{k+1}$ on the left hand side, we get
\begin{equation}
\begin{split}
\frac{1}{2}\sum_{i=0}^N\sum_{j\in \F_i^0}\int_{F_{ij}}C_m[\![u^{k+1}]\!]_{ij}[\![\phi]\!]_{ij}ds + \tau \sum_{i=0}^N\int_{\Omega_i}\sigma_i\nabla u_i^{k+1}\nabla\phi_idx \\
=\frac{1}{2}\sum_{i=0}^N\sum_{j\in \F_i^0}\int_{F_{ij}}C_m[\![u^k]\!]_{ij}[\![\phi]\!]_{ij}ds - \frac{1}{2}\tau\sum_{i=0}^N\sum_{j \in \F_i^0}\int_{F_{ij}}F([\![u^k]\!]_{ij})[\![\phi]\!]_{ij}ds.
\end{split}
\end{equation}

This now lets us define the following local (bi-)linear forms:

\begin{equation}
\label{eq:bilinear-forms}
\begin{split}
    a_i(u_i, \phi_i) & \coloneqq \int_{\Omega_i} \sigma_i \nabla u_i \nabla \phi_i dx, \\
    p_i(u_i, \phi_i) & \coloneqq \frac{1}{2} \sum_{j \neq i} \int_{F_{ij}} C_m [\![u]\!]_{ij}[\![\phi]\!]_{ij} ds, \\
    f_i(\phi_i) & \coloneqq \frac{1}{2} \sum_{j \neq i} \int_{F_{ij}} (C_m [\![u^k]\!]_{ij}[\![\phi]\!]_{ij} - \tau F([\![u^k]\!]_{ij})[\![\phi]\!]_{ij}) ds, \\
    d_i(u_i, \phi_i) & \coloneqq \tau a_i(u_i, \phi_i) + p_i(u_i, \phi_i).
\end{split}
\end{equation}

The global problem now reads: Find $u = \{u_i\}_{i=0}^N \in V(\Omega)$ such that
\begin{equation} \label{eq:bilinear-global}
    d_h(u, \phi) = f(\phi), \quad \forall \phi = \{\phi_i\}_{i=0}^N \in V(\Omega),
\end{equation}

where $d_h(u, \phi) \coloneqq \sum_{i=0}^Nd_i(u_i,\phi_i) = \sum_{i=0}^N(\tau a_i(u_i,\phi_i) + p_i(u_i,\phi_i))$ and $f(\phi) \coloneqq \sum_{i=0}^Nf_i(\phi_i)$. With local stiffness matrices $A_i$ and mass matrices $M_i$, (\ref{eq:bilinear-global}) corresponding to $a_i$ and $p_i$, respectively, can be written in matrix form:
\begin{equation} \label{eq:matrix-form}
    Ku=f, \quad \text{where } K = \sum_{i=0}^NK_i, \quad K_i = \tau A_i + M_i.
\end{equation}
\section{BDDC and Function Spaces} \label{spaces}

In this section, we will first construct the function spaces necessary for our preconditioner, extending the theory in \cite{2D-proof} to the three-dimensional case. Next, we will construct a family of BDDC preconditioners with different classes of primal constraints, building on results from \cite{sarkis-2D, sarkis-2D-deluxe, sarkis-3D,2D-proof} before concluding the section with some technical tools required for the proof of a condition number bound in the next section.

\subsection{Function Spaces}
\label{sec:spaces}
% \mw{actually, shape regularity is only meaningful if we consider an (infinite) family of partitionings and meshes. Maybe we should phrase it that way. Granted, this is in general treated very sloppy in the literature.}
Considering a family of partitions such that each subdomain is the union of shape-regular conforming finite elements, we define the global interface $\Gamma$ as the set of points belonging to at least two subdomains:
\[\Gamma \coloneqq \bigcup_{i = 0}^N \Gamma_i, \quad \Gamma_i \coloneqq \overline{\partial\Omega_i\setminus\partial\Omega}.\]
As in \cite{2D-proof}, we consider discontinuous Galerkin type discretizations in order to correctly treat jumps of the electric potentials along the cell membrane and gap junctions of each myocyte, which means that the degrees of freedom on the interface $\Gamma$ will have a multiplicity depending on the number of subdomains that share them.

We further assume that the finite elements are of diameter $h$ and that all subdomains are shape-regular with a characteristic diameter $H_i$. We denote $H = \max_i H_i$.

Considering the multiplicity of degrees of freedom on the interface, we denote with $\Omega_i' = \overline{\Omega}_i \cup \bigcup_{j \in \F_i^0} \overline{F}_{ji}$ the union of nodes in $\Omega_i$ and on faces $\overline{F}_{ji} \subset \partial \Omega_j$, $j \in \F_i^0$ and we define the according local finite element spaces
\[W_i(\Omega_i') \coloneqq V_i(\overline{\Omega}_i) \times \prod_{j \in \F_i^0} W_i(\overline{F}_{ji}).\]
Here, $W_i(\overline{F}_{ji})$ is the trace of the space $V_j(\overline{\Omega}_j)$ on $F_{ji} \subset \partial \Omega_j$ for all $j \in \F_i^0$. See \Cref{fig:fe-space} for a visualization of $W_i(\Omega_i')$. Each function $u_i \in W_i(\Omega_i')$ in this space can then be written as 
\begin{equation}
    u_i = \{(u_i)_i, \{(u_i)_j\}_{j \in \F_i^0} \}
    \label{u_1}
\end{equation}
where $(u_i)_i$ and $(u_i)_j$ are the restrictions of $u_i$ to $\overline{\Omega}_i$ and $\overline{F}_{ji}$, respectively.

\begin{figure}
    \centering
    \includegraphics[width=.7\textwidth]{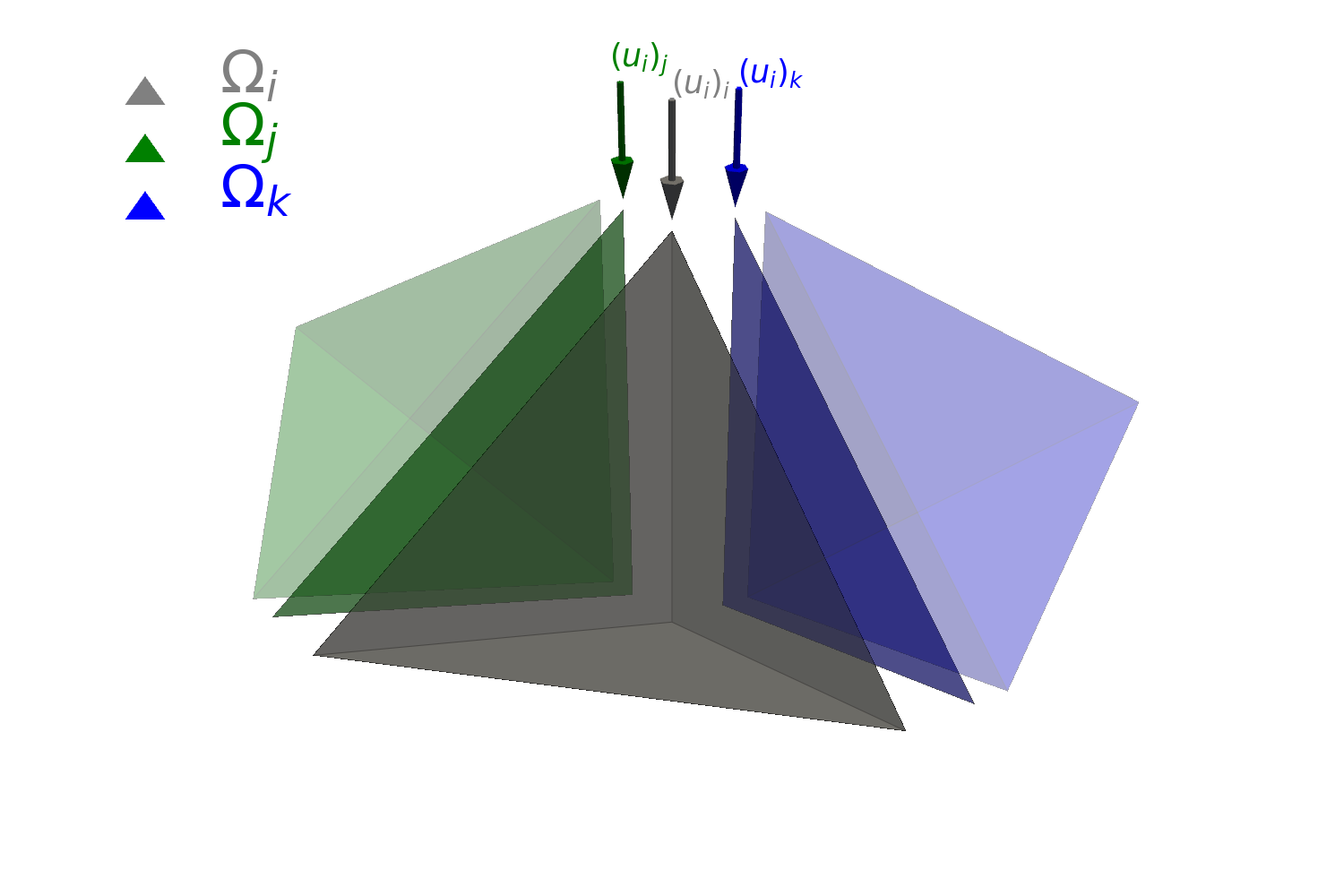}
    \caption{Schematic visualization of the FE space for a tetrahedral substructure $\Omega_i$ (grey) with two neighboring substructures. $u_i \in W(\Omega_i')$ will consist of $(u_i)_i$ on the substructure $\Omega_i$ and of $(u_i)_j$ and $(u_i)_k$, the traces of $V_{\{j, k\}}(\Omega_{\{j, k\}})$ on the faces $F_{ji}$ (green) and $F_{ki}$ (blue), respectively.}
    \label{fig:fe-space}
\end{figure}

Again as in \cite{2D-proof}, we partition $W_i(\Omega_i')$ into its interior part $W_i(I_i)$ and the finite element trace space $W_i(\Gamma_i')$, so
\[W_i(\Omega_i') = W_i(I_i) \times W_i(\Gamma_i')\]
where $\Gamma_i' = \Gamma_i \cup \{\bigcup_{j \in \F_i^0} \overline{F}_{ji}\}$ denotes the local interface nodes in $\Omega_i'$. This allows a representation of (\ref{u_1}) as
\begin{equation}
    u_i = (u_{i,I}, u_{i,\Gamma'}),
\end{equation}
where $u_{i,I}$ represents the values of $u_i$ at the interior nodes on $I_i$ and $u_{i, \Gamma'}$ denotes the values at the nodes on $\Gamma_i'$. We consider the product spaces
\[W(\Omega') \coloneqq \prod_{i = 0}^N W_i(\Omega_i'), \quad W(\Gamma') \coloneqq \prod_{i = 0}^N W_i(\Gamma_i').\]
Here, $u \in W(\Omega')$ means that $u = \{u_i\}_{i = 0}^N$ with $u_i \in W_i(\Omega_i')$ and similarly $u_{\Gamma'} \in W(\Gamma')$ means that $u_{\Gamma'} = \{u_{i, \Gamma'}\}_{i = 0}^N$ with $u_{i, \Gamma'} \in W_i(\Gamma_i')$ where $\Gamma' = \prod_{i = 0}^N \Gamma_i'$ denotes the global broken interface.

\textbf{Subdomain edges} will be denoted by $E_{ijk} \coloneqq \partial F_{ij} \cap \partial F_{ik}$ for two faces $F_{ij}$ and $F_{ik}$ of $\Omega_i$. Similar to the faces, we will treat the three geometrically identical edges $E_{ijk}$, $E_{jik}$ and $E_{kij}$ separately and we define $\E_i^0 \coloneqq \{(j, k)|E_{ijk} \text{ is an edge of } \Omega_i\}$.
\begin{remark}
These edges are geometrically identical because in the EMI model we assume that at each point in space, at most two myocytes are contiguous. This assumption implies that at each point we need to deal with at most three substructures, i.e. the two contiguous myocytes and the extracellular space. From a practical viewpoint, even when considering multiple extracellular domains meeting at an edge on the boundary of a myocyte (as in \Cref{results}, due to computational balancing needs), %the multiplicity of the degrees of freedom on that edge have the effect that for each degree of freedom,
at most three substructures have elements actually contributing to a given degree of freedom. For edges shared only between extracellular domains, the theory from continuous BDDC applies, and therefore for the current analysis we consider the extracellular space to be one substructure.
\end{remark}

Finally we define \textbf{Subdomain vertices} as $\V_i \coloneqq \{\cup_{(j,k) \in \E_i^0} \partial E_{ijk}\}$ and $\V_i'$ as the union of $\V_i$ with all vertices from other subdomains that $\Omega_i$ has a share in. We say that $u = \{u_i\}_{i = 0}^N \in W(\Omega')$ is continuous at the corners $\V_i$ if
\[(u_i)_i(x) = (u_j)_i(x) \text{ at all } x \in \V_i \text{ for all } j \in \N_x.\]

\begin{definition}
    The discrete harmonic extension $\HH_i'$ in the sense of $d_i$ as defined in (\ref{eq:bilinear-forms}) is defined as
    \begin{equation*}
    \HH_i': W_i(\Gamma') \rightarrow W_i(\Omega_i'), 
    \begin{cases}
        d_i(\HH_i'u_{i,\Gamma'}, v_i) = 0 & \forall v_i \in W_i(\Omega_i') \\
        \HH_i'u_{i,\Gamma'} = (u_i)_i & \text{on } \partial \Omega_i \\
        \HH_i'u_{i,\Gamma'} = (u_i)_j & \text{on } F_{ji} \subset \partial \Omega_j \\
        & \text{and on } E_{j, \E} \subset \partial \Omega_j.
    \end{cases}
    \end{equation*}
\end{definition}

With a notion of faces, edges, and vertices, we can define the function spaces relevant for our preconditioner as in \cite{dd-book, sarkis-3D}:

% \begin{definition}[Subspaces $\widetilde{W}(\Omega')$ and $\widetilde{W}(\Gamma')$]
% We define the space $\widetilde{W}(\Omega')$ as the subspace of functions $u = \{u_i\}_{i=0}^N \in W(\Omega')$ for which the following conditions hold for all $0 \leq i \leq N$:
% \begin{itemize}
%     \item $u$ is continuous at all corners $\V_i$.
%     \item On all edges $E_{i,\E}$ for $\E \in \E_i^0$
%     \[(\overline{u}_i)_{i,\E} = (\overline{u}_j)_{i,\E} \text{ for all } j \in \E.\]
%     \item On all faces $F_{ij}$ for $j \in \F_i^0$
%     \[(\overline{u}_i)_{i,F_{ij}} = (\overline{u}_j)_{i,F_{ij}}.\]
% \end{itemize}

% Here, 
% \[(\overline{u}_j)_{i,\E} = \frac{1}{|E_{i,\E}|} \int_{E_{i,\E}}(u_j)_i ds, \quad (\overline{u}_j)_{i,F_{ij}} = \frac{1}{|F_{ij}|} \int_{F_{ij}}(u_j)_i ds.\]
% We denote with $\widetilde{W}(\Gamma')$ the subspace of $\widetilde{W}(\Omega')$ of functions which are discrete harmonic in the sense of $\HH_i'$.
% \end{definition}

\begin{definition}[Subspaces $\widetilde{W}(\Omega')$ and $\widetilde{W}(\Gamma')$]
\label{sec2:VEF}
We define the space $\widetilde{W}(\Omega')$ as the subspace of functions $u = \{u_i\}_{i=0}^N \in W(\Omega')$ for which the following conditions hold for all $0 \leq i \leq N$:
\begin{itemize}
    \item $u$ is continuous at all corners $\V_i$.
    \item On all edges $E_{ijk}$ for $(j,k) \in \E_i^0$
    \[(\overline{u}_i)_{i,E_{ijk}} = (\overline{u}_j)_{i,E_{ijk}} = (\overline{u}_k)_{i, E_{ijk}}.\]
    \item On all faces $F_{ij}$ for $j \in \F_i^0$
    \[(\overline{u}_i)_{i,F_{ij}} = (\overline{u}_j)_{i,F_{ij}}.\]
\end{itemize}

Here, 
\[(\overline{u}_j)_{i,E_{ijk}} = \frac{1}{|E_{ijk}|} \int_{E_{ijk}}(u_j)_i ds, \quad (\overline{u}_j)_{i,F_{ij}} = \frac{1}{|F_{ij}|} \int_{F_{ij}}(u_j)_i ds.\]
We denote with $\widetilde{W}(\Gamma')$ the subspace of $\widetilde{W}(\Omega')$ of functions which are discrete harmonic in the sense of $\HH_i'$.
\end{definition}

\begin{figure}
    \centering
    \includegraphics[width=.5\textwidth]{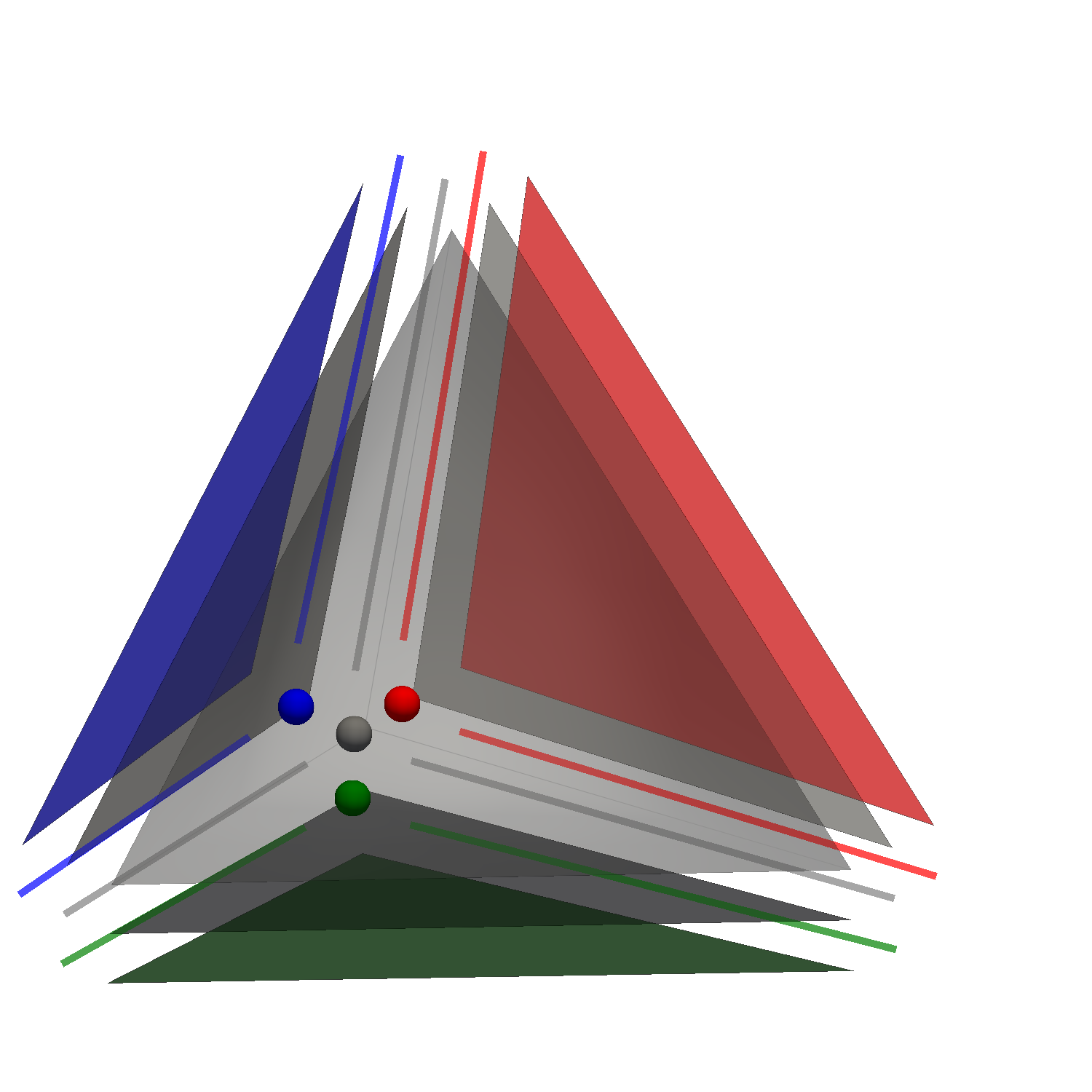}
    \caption{Representation of primal constraints connected to a tetrahedral substructure $\Omega_i$ (grey) surrounded by three neighbors (red, green and blue), with the fourth face intersecting with $\partial \Omega$ (the global Neumann boundary). For each face, edge and vertex, one primal constraint per involved substructure is created and on each of them, the according averaging constraints are imposed. In this particular example, $\Omega_i$ will contribute to the primal space with 6 face average, 9 edge average and 4 vertex constraints.}
    \label{fig:primal-constraints}
\end{figure}

As indicated in a simplified way by \Cref{fig:primal-constraints}, the primal space $\widetilde{W}(\Omega')$ can grow in dimension very quickly, which increases the computational cost of setting up a BDDC preconditioner dramatically, making it desirable to reduce the amount of primal constraints. We therefore also consider the following setting, removing the face averages from the primal space:

\begin{definition}[Subspaces $\widetilde{W}_{VE}(\Omega')$ and $\widetilde{W}_{VE}(\Gamma')$]
\label{sec2:VE}
We define the space \newline
$\widetilde{W}_{VE}(\Omega')$ as the subspace of functions $u = \{u_i\}_{i=0}^N \in W(\Omega')$ for which the following conditions hold for all $0 \leq i \leq N$:
\begin{itemize}
    \item $u$ is continuous at all corners $\V_i$.
    \item On all edges $E_{ijk}$ for $(j,k) \in \E_i^0$
    \[(\overline{u}_i)_{i,E_{ijk}} = (\overline{u}_j)_{i,E_{ijk}} = (\overline{u}_k)_{i, E_{ijk}}.\]
\end{itemize}

We define $\widetilde{W}_{VE}(\Gamma')$ in the same way as $\widetilde{W}(\Gamma')$ above.
\end{definition}

A function $u \in \widetilde W(\Omega')$ (or $u \in \widetilde{W}_{VE}(\Omega')$) can be represented as $u = (u_I, u_{\Delta}, u_{\Pi})$ where $I = \prod_{i = 0}^N I_i$ represents the degrees of freedom on \textit{interior} nodes, $\Pi$, which we will call \textit{primal}, denotes degrees of freedom at the vertices $\V_i'$ and the average face and edge values (or only average edge values in the case of $\widetilde{W}_{VE}(\Omega')$). $\Delta$ refers to the remaining nodal degrees of freedom on $\Gamma_i' \setminus \V_i'$ with zero interface averages for the involved interfaces. We will refer to them as \textit{dual}. 
A more exhaustive explanation of such representation can be found in \cite{dd-book}.

Introducing the spaces
\[W_{\Delta}(\Gamma') = \prod_{i = 0}^N W_{i, \Delta}(\Gamma_i') \quad\text{and}\quad \widetilde W_{\Pi}(\Gamma')\]
% \mw{Is there the defining term for the right term missing? Yes, support on primal constraints, continuous on vertices, discrete harmonic extension of those}
where $W_{i,\Delta}(\Gamma_i')$ are the local spaces associated with the dual degrees of freedom and $\widetilde W_{\Pi}(\Gamma')$ refers to the space associated with the primal degrees of freedom, we can decompose $\widetilde W(\Gamma')$ into
\[\widetilde W(\Gamma') = W_{\Delta}(\Gamma') \times \widetilde W_{\Pi}(\Gamma')\]
and get the representation
\[u_{\Gamma'} \in \widetilde W(\Gamma'), \quad u_{\Gamma'} = (u_{\Delta}, u_{\Pi})\]
with $u_{\Pi} \in \widetilde W_{\Pi}(\Gamma')$ and $u_{\Delta} = \{u_{i, \Delta}\}_{i = 0}^N \in W_{\Delta}(\Gamma')$. Note that we can write $u_{i, \Delta}$ as
\[u_{i, \Delta} = \{\{u_{i, F_{ij}}, u_{i, F_{ji}}\}_{j \in \F_i^0}, \{u_{i, E_{ijk}}, u_{i, E_{jik}}, u_{i, E_{kij}}\}_{(j, k) \in \E_i^0}\}\]
where $u_{i, F}$ is the restriction of $u_{i, \Delta}$ to the face $F$, and $u_{i, E}$ is the restriction of $u_{i, \Delta}$ to the edges $E$. For convenience, \Cref{tab:spaces} gives an overview of the above mentioned function spaces.

% Finally, we introduce the space
% \[\widehat W_{\Delta}(\Gamma) \coloneqq \prod_{i = 0}^N V_i(\Gamma_i \setminus \V_i)\]
% with $V_i(\Gamma_i \setminus \V_i)$ being the restriction of $V_i(\Gamma_i)$ to $\Gamma_i \setminus \V_i$.

% \mw{It's extremely difficult to keep track of all these spaces. Maybe we should add a reference table listing all spaces with a brief reminder what they are and a reference where they are defined.}
{
\renewcommand{\arraystretch}{1.2}
\begin{table}[h]
    \centering
    \begin{tabular}{l p{9cm}}
         \hline 
         \textbf{Space Symbol} & \textbf{Short Description} \\\hline
         $V_i(\overline{\Omega}_i)$ & local FE space on $\overline{\Omega}_i$ \\
         $W_i(\overline{F}_{ji})$ & trace of $V_j(\overline{\Omega}_j)$ on the face $F_{ji}$ between \newline subdomains $\Omega_i$ and $\Omega_j$ \\
         $W_i(\Omega_i')$ & local FE space including duplicated face degrees of freedom \\
         $W_i(I_i)$ & interior part of $W_i(\Omega_i')$ \\
         $W_i(\Gamma_i')$ & part of $W_i(\Omega_i')$ on the broken interface $\Gamma_i'$ \\
         $W(\Omega'), W(\Gamma')$ & global product spaces of the $W_i(\Omega_i')$ and $W_i(\Gamma_i')$ \\
         $\widetilde{W}(\Omega')$ & functions $u \in W(\Omega')$ continuous on vertices \newline with edge and face average constraints \\
         $\widetilde{W}_{VE}(\Omega')$ & functions $u \in W(\Omega')$ continuous on vertices \newline with edge but no face average constraints \\
         $\widetilde{W}(\Gamma'),\widetilde{W}_{VE}(\Gamma')$ & subspaces of $\widetilde{W}(\Omega')$, $\widetilde{W}_{VE}(\Omega')$ of functions \newline which are discrete harmonic in the sense of $\HH_i'$ \\
         $W_{i, \Delta}(\Gamma_i')$ & local space associated with nodal degrees of freedom on\newline $\Gamma_i'\setminus \V_i'$ with zero interface averages for the involved interfaces \\
         $W_{\Delta}(\Gamma')$ & global product space of the $W_{i, \Delta}(\Gamma_i')$ \\
         $\widetilde{W}_{\Pi}(\Gamma')$ & space of degrees of freedom associated with primal constraints (vertex values and face / edge averages)\\\hline
    \end{tabular}
    \caption{A list of the function spaces mentioned in \Cref{sec:spaces} with short descriptions.}
    \label{tab:spaces}
\end{table}
}
\subsection{Schur Bilinear Form, Restriction and Scaling Operators}

Assuming that in algebraic form, we can write the local problems as $K_i'u_i = f_i$, we can order the degrees of freedom such that the local matrices read
\begin{equation}
    \label{local-matrix}
    K_i' = \begin{bmatrix} K_{i, II}' & K_{i, I\Gamma'}' \\ K_{i, \Gamma' I}' & K_{i, \Gamma' \Gamma'}' \end{bmatrix}.
\end{equation}
By eliminating the interior degrees of freedom (\textit{static condensation}), our preconditioner will work only on the unknowns on the interface $\Gamma'$. In order to do this, we need the local Schur complement systems
\[S_i' \coloneqq K_{i, \Gamma' \Gamma'}' - K_{i, \Gamma' I}'(K_{i, II}')^{-1}K_{i, I \Gamma'}'\]
with which we define the \textit{unassembled} global Schur complement matrix \[S' = \text{diag}[S_0',\dots,S_N'].\] 

Let $R_{\Gamma'}^{(i)}: W(\Omega') \rightarrow W_i(\Gamma_i')$ denote the restriction operators returning the local interface components and define $R_{\Gamma'} \coloneqq \sum_{i = 0}^N R_{\Gamma'}^{(i)}$. The global Schur complement matrix is then given by $\widehat S_{\Gamma'} = R_{\Gamma'}^TS'R_{\Gamma'}$.

Hence, instead of solving the global linear system $Ku = f$, we can first eliminate the interior degrees of freedom to retrieve a right-hand side $\widehat f_{\Gamma'}$ on the interface $\Gamma'$, then solve the Schur complement system
\[\widehat S_{\Gamma'} u_{\Gamma'} = \widehat f_{\Gamma'}\]
and use the solution $u_{\Gamma'}$ on the interface to recover the interior solution as
\[u_{i,I} = (K_{i,II}')^{-1}(f_{i,I} - K_{i,I\Gamma'}'u_{\Gamma'}).\]

The Schur bilinear form can now be defined as 
\[d_i(\HH_i'u_{i,\Gamma'},\HH_i'v_{i,\Gamma'}) = v_{i,\Gamma'}^TS_i'u_{i,\Gamma'} = s_i'(u_{i,\Gamma'},v_{i,\Gamma'})\]
and it has the property
\begin{equation}
    \label{schur-min-property}
    s_i'(u_{i,\Gamma'},u_{i,\Gamma'}) = \min_{v_i|_{\partial \Omega_i \cap \Gamma'} = u_{i,\Gamma'}} d_i(v_i,v_i)
\end{equation}
which allows us to work with discrete harmonic extensions rather than functions only defined on $\Gamma'$.

The function spaces from the previous chapter will be equipped with the following restriction operators:
\begin{equation*}
    \begin{split}
        R_{i, \Delta}: W_{\Delta}(\Gamma')\rightarrow W_{i, \Delta}(\Gamma'), \quad \quad & R_{\Gamma' \Delta}: W(\Gamma') \rightarrow W_{\Delta}(\Gamma'), \\
        R_{i, \Pi}: \widetilde W_{\Pi}(\Gamma') \rightarrow W_{i, \Pi}(\Gamma_i'), \quad \quad & R_{\Gamma' \Pi}: W(\Gamma') \rightarrow\widetilde W_{\Pi}(\Gamma').
    \end{split}
\end{equation*}
We further define the direct sums $R_{\Delta} = \oplus R_{i, \Delta}, R_{\Pi} = \oplus R_{i, \Pi}$ and $\widetilde R_{\Gamma'} = R_{\Gamma' \Pi} \oplus R_{\Gamma' \Delta}$.

For the EMI model, we consider $\rho$-scaling for the dual variables. 
Other scalings, such as stiffness or deluxe scaling, could be considered in the future. For this theoretical analysis, we concentrate on the simplest case of $\rho$-scaling.
For $x \in \overline{\Omega}_i$ it is defined by the pseudoinverses
\begin{equation}
    \label{rho-scaling}
    \delta_i^{\dag}(x) \coloneqq \frac{\sigma_i}{\sum_{j \in \N_x} \sigma_j}.
\end{equation}
% where $\N_x$ refers to the set of indexes of subdomains with $x$ in the closure of the subdomain. \mw{Move the definition of $\N_x$ to Sec 2 or at least before paragraph "subdomain edges".} 
For $x \notin \overline{\Omega}_i$ we define $\delta_i^{\dag}(x) = 0$. We note that the $\delta_i^{\dag}$ form a partition of unity, so $\sum_{i = 0}^N \delta_i^{\dag} (x) = 1$ for all $x \in \Omega$.

Recall the following inequality ((6.19) in \cite{dd-book}), it will be an important tool later in this paper:
\begin{equation}
    \label{sigma-ineq}
    \sigma_i (\delta_j^\dag(x))^2 \leq \min\{\sigma_i, \sigma_j\} \quad \forall j \in \N_x.
\end{equation}

We define the local scaling operators on each subdomain $\Omega_i$ as the diagonal matrix
\begin{equation}
    D_i \coloneqq \text{diag}(\delta_i^{\dag}),
\end{equation}
so the $i$-th scaling matrix contains the coefficients (\ref{rho-scaling}) evaluated on the nodal points of $\Omega_i$ along the diagonal. With the scaling operators, we can define scaled local restriction operators
\[R_{i, D, \Gamma'} \coloneqq D_iR_{i, \Gamma'},\quad R_{i,D,\Delta} \coloneqq R_{i,\Gamma'\Delta}R_{i,D,\Gamma'},\]
$R_{D,\Delta}$ as the direct sum of the $R_{i,D,\Delta}$ and finally the global scaled restriction operator 
\[\widetilde R_{D,\Gamma'} \coloneqq R_{\Gamma'\Pi} \oplus R_{D,\Delta}R_{\Gamma'\Delta}.\]

\subsection{The BDDC preconditioner}
Introduced in \cite{dohrmann}, Balancing Domain Decomposition by Constraints (BDDC) is a two-level preconditioner for the Schur complement system $\widehat S_{\Gamma'}u_{\Gamma'} = \widehat f_{\Gamma'}$. Partitioning the degrees of freedom in each subdomain $\Omega_i$ into interior ($I$), dual ($\Delta$) and primal ($\Pi$) degrees of freedom, we can further partition (\ref{eq:matrix-form}) into
\[K_i' = \begin{bmatrix}
    K_{i,II}' & K_{i,I\Delta}' & K_{i,I\Pi}' \\
    K_{i,\Delta I}' & K_{i,\Delta \Delta}' & K_{i, \Delta \Pi}' \\
    K_{i,\Gamma I}' & K_{i, \Gamma\Delta}' & K_{i, \Gamma\Gamma}'
\end{bmatrix}.\]

The action of the BDDC preconditioner is now given by 
\[M^{-1}_{\text{BDDC}}x = \widetilde R^T_{D, \Gamma'}(\widetilde S_{\Gamma'})^{-1}\widetilde R_{D, \Gamma'}x,\quad \widetilde S_{\Gamma'} = \widetilde R_{\Gamma'} S' \widetilde R_{\Gamma'}^T,\]
where the inverse of $\widetilde S^{-1}_{\Gamma'}$ does not have to be computed explicitly but can be evaluated with Block-Cholesky elimination via
\[\widetilde S^{-1}_{\Gamma'} = \widetilde R^T_{\Gamma' \Delta} \big( \sum_{i = 0}^N \begin{bmatrix} 0 & R^T_{i, \Delta} \end{bmatrix}\begin{bmatrix} K_{i, II}' & K_{i, I\Delta}' \\ K_{i, \Delta I}' & K_{i, \Delta \Delta}'\end{bmatrix}^{-1}\begin{bmatrix}0 \\ R_{i, \Delta}\end{bmatrix} \big) \widetilde R_{\Gamma'\Delta} + \Phi S_{\Pi\Pi}^{-1}\Phi^T.\]
The first term above is the sum of independently computed local solvers on each subdomain $\Omega_i'$, and the second term is a coarse solver for the primal variables (which is computed independently of the local solvers), where
\[\Phi = R^T_{\Gamma'\Pi} - R^T_{\Gamma'\Delta}\sum_{i = 0}^N\begin{bmatrix} 0 & R^T_{i, \Delta} \end{bmatrix}\begin{bmatrix} K_{i, II}' & K_{i, I\Delta}' \\ K_{i, \Delta I}' & K_{i, \Delta \Delta}'\end{bmatrix}^{-1}\begin{bmatrix}K_{i,I\Pi} \\ R_{i, \Delta\Pi}\end{bmatrix}R_{i,\Pi},\]
\[S_{\Pi\Pi} = \sum_{i=0}^N R_{i,\Pi}^T\big(K_{i, \Pi\Pi}' - \begin{bmatrix} K_{i,\Pi I} & K_{i,\Pi\Delta}' \end{bmatrix}\begin{bmatrix} K_{i, II}' & K_{i, I\Delta}' \\ K_{i, \Delta I}' & K_{i, \Delta \Delta}'\end{bmatrix}^{-1}\begin{bmatrix}K_{i,I\Pi} \\ R_{i, \Delta\Pi}\end{bmatrix}\big)R_{i,\Pi}.\]

\subsection{Technical Tools and Assumptions}
As in \cite{2D-proof}, we will utilize the following Lemma that can be proven considering the continuity and coercivity of the standard Laplacian bilinear form $a_i$:
\begin{lemma}
    For the bilinear form $d_i(u_i, v_i) = \tau a_i(u_i, v_i) + p_i(u_i, v_i)$ with $a_i$ and $p_i$ as defined in (\ref{eq:bilinear-forms}), the following bounds hold:
    \[d_i(u_i, u_i) \leq \tau \sigma_M |u_i|^2_{H^1(\Omega_i)} + \sum_{j \neq i} \frac{C_m}{2}\|u_i - u_j\|^2_{L^2(F_{ij})},\]
    \[d_i(u_i, u_i) \geq \tau \sigma_m|u_i|^2_{H^1(\Omega_i)} + \sum_{j \neq i} \frac{C_m}{2}\|u_i - u_j\|^2_{L^2(F_{ij})},\]
    for all $u_i \in V_i(\Omega_i)$ with $\sigma_m$ and $\sigma_M$ being the minimum and maximum values of the coefficients $\sigma_i$, respectively.
\end{lemma}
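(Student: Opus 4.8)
The plan is to split $d_i$ along its own definition, $d_i(u_i,u_i)=\tau\,a_i(u_i,u_i)+p_i(u_i,u_i)$, and to estimate the two pieces independently, observing that the penalty term contributes \emph{identically} to the upper and to the lower bound. Indeed, by the definition in (\ref{eq:bilinear-forms}) together with $[\![u]\!]_{ij}=u_i-u_j$,
\[
p_i(u_i,u_i)=\frac{1}{2}\sum_{j\neq i}\int_{F_{ij}}C_m\,[\![u]\!]_{ij}^{2}\,ds=\sum_{j\neq i}\frac{C_m}{2}\,\|u_i-u_j\|_{L^2(F_{ij})}^{2},
\]
so this term needs no estimation: it appears verbatim on the right-hand side of both inequalities.

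Next I would treat the diffusion form. Since $a_i(u_i,u_i)=\int_{\Omega_i}\sigma_i\,|\nabla u_i|^{2}\,dx$ and the conductivity obeys $\sigma_m\le\sigma_i(x)\le\sigma_M$ for a.e.\ $x\in\Omega_i$, with $\sigma_m,\sigma_M$ the global minimum and maximum of the coefficients, monotonicity of the integral gives
\[
\sigma_m\,|u_i|_{H^1(\Omega_i)}^{2}\le a_i(u_i,u_i)\le\sigma_M\,|u_i|_{H^1(\Omega_i)}^{2},
\]
where $|u_i|_{H^1(\Omega_i)}^{2}=\int_{\Omega_i}|\nabla u_i|^{2}\,dx$ is the $H^1$ seminorm. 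Multiplying through by $\tau>0$ and adding the identity for $p_i$ from the previous step yields both claimed bounds simultaneously.

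There is no genuine obstacle here; the only point deserving a line of justification is that the pointwise bounds on $\sigma_i$ (and the treatment of $C_m$ as a constant over each membrane face $F_{ij}$) are precisely the modelling hypotheses fixed in \Cref{sec:emi}, which is what legitimises pulling the extremal values out of the integral. The argument is the three-dimensional transcription of the corresponding lemma in \cite{2D-proof} and carries over without change, as it relies only on the continuity and coercivity of the standard Laplacian form $a_i$ together with the nonnegativity and exact form of $p_i$.
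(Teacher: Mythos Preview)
Your argument is correct and is exactly the approach the paper indicates: it does not spell out a proof but only remarks that the lemma ``can be proven considering the continuity and coercivity of the standard Laplacian bilinear form $a_i$,'' which is precisely your splitting $d_i=\tau a_i+p_i$, the pointwise bounds $\sigma_m\le\sigma_i\le\sigma_M$ for $a_i$, and the observation that $p_i(u_i,u_i)$ is already the jump term on the right-hand side.
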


We assume that the cardiac domain $\Omega \subset \R^3$ is subdivided into substructures $\Omega_i \subset \R^3$ which have Lipschitz-continuous boundaries. We will work with Sobolev spaces on subsets $\Lambda \subset \partial \Omega_i$ of the subdomain boundaries which have non-vanishing two-dimensional (faces) or one-dimensional (edges) measure and and are relatively open to $\partial \Omega_i$. We will mostly consider the space $H^{1/2}(\Lambda)$ of functions $u \in L^2(\Lambda)$ with finite semi-norm $|u|_{H^{1/2}(\Lambda)} < \infty$ and norm 
\[\|u\|^2_{H^{1/2}(\Lambda)} = \|u\|^2_{L^2(\Lambda)} + |u|^2_{H^{1/2}(\Lambda)} < \infty.\]

We will also use the set of functions in $H^{1/2}(\Lambda)$ which extend to zero from $\Lambda$ to $\partial \Omega_i$ by the extension operator $\E_{\text{ext}}$
\[\E_{\text{ext}}: \Lambda \rightarrow \partial \Omega_i,\quad \E_{\text{ext}}u = \begin{cases}
    0 & \text{on } \partial\Omega_i \setminus \Lambda \\
    u & \text{on } \Lambda.
\end{cases}\]

We will denote this space by
\[H^{1/2}_{00}(\Lambda) = \big\{ u \in H^{1/2}(\Lambda): \E_{\text{ext}}u \in H^{1/2}(\partial\Omega_i)\big\}.\]

\begin{remark}
    For notational convenience, we will write $A \lesssim B$ whenever $A \leq cB$ where $c$ is some constant independent on problem parameters (like e.g. subdomain sizes, mesh size, or conductivity coefficients). Whenever both $A \lesssim B$ and $B \lesssim A$ hold, we will write $A \sim B$.
\end{remark}

The following Lemmas will be used in the theoretical analysis in the next chapter, their proofs can be found in \cite{dd-book} (Lemma 4.17, Lemma 4.19 and Lemma 4.26).

\begin{lemma}
\label{sec2:edge-face}
    % \mw{$\lesssim$ is so far not defined. A brief comment is in order.}
    Let $\overline{u}_{E_{ijk}}$ be the average value of $u$ over $E_{ijk}$, an edge of the face $F_{ij}$. Then,
    \[\|u\|^2_{L^2(E_{ijk})} \lesssim \bigg( 1 + \log\frac{H}{h} \bigg) \|u\|^2_{H^{1/2}(F_{ij})}\]
    and
    \[\|u - \overline{u}_{E_{ijk}}\|^2_{L^2(E_{ijk})} \lesssim \bigg(1 + \log\frac{H}{h}\bigg) |u|^2_{H^{1/2}(F_{ij})}.\]
\end{lemma}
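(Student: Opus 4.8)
The estimates in \Cref{sec2:edge-face} are classical substructuring bounds — they are the two-dimensional-face / one-dimensional-edge instance of the edge lemma in \cite{dd-book} — so the plan is to reconstruct that argument. First I would strip away the geometric scales: the factor $1+\log(H/h)$ is dilation invariant, and under a dilation of $F_{ij}$ to a reference face $\widehat F$ of unit diameter (carrying a shape-regular mesh with $n\sim H/h$ elements across, and a reference edge $\widehat E\subset\partial\widehat F$) the seminorm $|u|^2_{H^{1/2}(F_{ij})}$ and $\|u-\overline u_{E}\|^2_{L^2(E)}$ transform by the same power of $H$; so it suffices to prove the bounds on $(\widehat F,\widehat E)$ for an arbitrary $u$ in the finite element trace space. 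I would also note that the seminorm inequality is the substantive one: the first inequality follows from it by writing $\|u\|^2_{L^2(\widehat E)}\le 2\|u-\overline u_{\widehat E}\|^2_{L^2(\widehat E)}+2\,|\widehat E|\,(\overline u_{\widehat E})^2$, comparing $\overline u_{\widehat E}$ with the face average $\overline u_{\widehat F}$ (their difference equals $\overline{(u-\overline u_{\widehat F})}_{\widehat E}$, which is bounded by $\|u-\overline u_{\widehat F}\|_{L^2(\widehat E)}$), and using $(\overline u_{\widehat F})^2\le\|u\|^2_{L^2(\widehat F)}/|\widehat F|$.

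For the inequality $\|u\|^2_{L^2(\widehat E)}\lesssim(1+\log n)\,\|u\|^2_{H^{1/2}(\widehat F)}$ I would use the borderline-trace / inverse-inequality trick. The trace map onto the one-dimensional edge is bounded from $H^{1/2+\varepsilon}(\widehat F)$ into $L^2(\widehat E)$ for every $\varepsilon\in(0,1/2]$, with a constant that degenerates only like $\varepsilon^{-1/2}$ as $\varepsilon\to0$ (this is the quantitative form of the fact that the trace into a codimension-one set just barely fails at regularity $1/2$). On the other hand, the usual finite element inverse inequality gives $\|u\|_{H^{1/2+\varepsilon}(\widehat F)}\lesssim h^{-\varepsilon}\,\|u\|_{H^{1/2}(\widehat F)}$ with a constant uniform in $\varepsilon\in(0,1/2]$ (finite element functions on an $h$-mesh being spectrally concentrated below frequency $\sim 1/h$). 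Combining,
\[\|u\|_{L^2(\widehat E)}\;\lesssim\;\varepsilon^{-1/2}\,\|u\|_{H^{1/2+\varepsilon}(\widehat F)}\;\lesssim\;\varepsilon^{-1/2}\,h^{-\varepsilon}\,\|u\|_{H^{1/2}(\widehat F)},\]
and choosing $\varepsilon\sim 1/(1+\log n)$ makes $h^{-\varepsilon}$ bounded and $\varepsilon^{-1/2}\sim(1+\log n)^{1/2}$, which after squaring is exactly the desired estimate. (The real-variable proof in \cite{dd-book} instead localizes the $H^{1/2}(\widehat F)$ double integral to the layer of elements touching $\widehat E$ and decomposes that layer dyadically into $\sim\log n$ substrips, each contributing a scale-invariant multiple of $|u|^2_{H^{1/2}(\widehat F)}$; either route is acceptable.)

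It remains to pass to the seminorm version on $(\widehat F,\widehat E)$. Apply the inequality just proved to $u-\overline u_{\widehat F}$ and use the fractional Poincaré inequality on the unit-diameter face, $\|u-\overline u_{\widehat F}\|^2_{L^2(\widehat F)}\lesssim|u|^2_{H^{1/2}(\widehat F)}$, which gives $\|u-\overline u_{\widehat F}\|^2_{H^{1/2}(\widehat F)}\lesssim|u|^2_{H^{1/2}(\widehat F)}$ and hence $\|u-\overline u_{\widehat F}\|^2_{L^2(\widehat E)}\lesssim(1+\log n)\,|u|^2_{H^{1/2}(\widehat F)}$. Since $\overline u_{\widehat F}-\overline u_{\widehat E}=-\,\overline{(u-\overline u_{\widehat F})}_{\widehat E}$ and $\|\overline{(u-\overline u_{\widehat F})}_{\widehat E}\|_{L^2(\widehat E)}\le\|u-\overline u_{\widehat F}\|_{L^2(\widehat E)}$, a triangle inequality yields $\|u-\overline u_{\widehat E}\|_{L^2(\widehat E)}\le 2\,\|u-\overline u_{\widehat F}\|_{L^2(\widehat E)}$, and squaring gives the second estimate. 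Undoing the dilation returns both bounds of \Cref{sec2:edge-face}.

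The one point requiring care — and the main obstacle — is the sharp $\varepsilon$-dependence: one must verify that the trace constant grows no faster than $\varepsilon^{-1/2}$ and that the fractional inverse inequality constant stays bounded as $\varepsilon\to0$, since it is the competition between these two that produces precisely $1+\log(H/h)$ rather than a power of $H/h$; in the alternative dyadic argument the analogous delicate step is the bookkeeping of volume ratios across the $\sim\log(H/h)$ substrips, including the innermost ($h$-scale) one where a finite element inverse inequality must replace the continuous Poincaré estimate. Everything else — the dilation reductions, the fractional Poincaré inequality, and the manipulations relating the edge average to the face average — is routine.
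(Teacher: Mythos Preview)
The paper does not prove \Cref{sec2:edge-face} at all: it is quoted as a technical tool, with the sentence ``their proofs can be found in \cite{dd-book} (Lemma 4.17, Lemma 4.19 and Lemma 4.26)'' standing in for the argument. Your proposal therefore goes well beyond what the paper offers, and the sketch is correct. The $\varepsilon$-trick you lead with --- trace constant $\sim\varepsilon^{-1/2}$ from $H^{1/2+\varepsilon}(\widehat F)$ into $L^2(\widehat E)$, combined with the finite element inverse inequality $\|u\|_{H^{1/2+\varepsilon}}\lesssim h^{-\varepsilon}\|u\|_{H^{1/2}}$ and the choice $\varepsilon\sim 1/(1+\log n)$ --- is a clean and legitimate way to produce the logarithm, and your passage from the full-norm inequality to the seminorm version via the face average and the fractional Poincar\'e inequality is routine and correct. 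The proof in \cite{dd-book} that the paper defers to is closer to the alternative you mention parenthetically: it works elementwise on the face, bounding the edge $L^2$ norm by summing contributions from strips of elements parallel to the edge, with the $\sim\log(H/h)$ strips each contributing a uniformly bounded multiple of the $H^{1/2}$ seminorm. Both routes are rigorous; yours is more functional-analytic and hides the combinatorics inside interpolation theory, while the reference's is more explicit but requires no tracking of sharp $\varepsilon$-constants.
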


In short, \Cref{sec2:edge-face} will enable us to bound edge terms by face terms of an adjacent face. Another inequality we will leverage for the edge terms is
\begin{lemma}
\label{sec2:boundary-edge}
    Let $E_{ijk}$ be an edge of a subdomain $\Omega_i$ and let $u \in V^h$. Then, 
    \[|\HH_i'(\Theta_{E_{ijk}} u)|_{H^{1/2}(\partial \Omega_i)}^2 \lesssim \|I^h\Theta_{E_{ijk}}u\|_{L^2(E_{ijk})}^2,\]
    where $\Theta_{E_{ijk}}$ is the characteristic finite element function on the edge $E_{ijk}$ and $I^h$ is the usual finite element interpolant.
\end{lemma}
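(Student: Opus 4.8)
The plan is to reduce the claim to bounding the $H^1(\Omega_i)$-energy of a single, explicitly constructed extension that is confined to an $O(h)$-neighbourhood of the edge $E_{ijk}$, and to estimate that energy one element-edge at a time. First I would observe that, by the definition of $\HH_i'$, the trace of $\HH_i'(\Theta_{E_{ijk}}u)$ on $\partial\Omega_i$ is simply the finite element function on $\partial\Omega_i$ that equals $I^h\Theta_{E_{ijk}}u = u$ at the nodes of $E_{ijk}$ and vanishes at every other node of $\partial\Omega_i$, i.e.\ the extension by zero of $u|_{E_{ijk}}$ to $\partial\Omega_i$. Since $H^{1/2}(\partial\Omega_i)$ is the trace space of $H^1(\Omega_i)$ on the shape-regular Lipschitz subdomain $\Omega_i$ --- equivalently, by the minimising property (\ref{schur-min-property}) together with the spectral bounds on $d_i$ in the first Lemma above --- it suffices to produce one finite element function $\tilde w$ on $\Omega_i$ with the prescribed boundary values and to bound $|\tilde w|_{H^1(\Omega_i)}^2$ by $\|I^h\Theta_{E_{ijk}}u\|_{L^2(E_{ijk})}^2$.

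Next I would take $\tilde w$ to be the finite element function that equals $u$ at the nodes lying on $E_{ijk}$ and vanishes at all remaining nodes of $\Omega_i$. By shape-regularity each edge node belongs to only a uniformly bounded number of tetrahedra, so $\tilde w$ is supported in a thin tube of width $O(h)$ around $E_{ijk}$ and, in particular, carries the required boundary data.

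Then I would estimate $|\tilde w|_{H^1(\Omega_i)}^2$ element-edge by element-edge: for a single element-edge $e \subset E_{ijk}$ with nodal endpoint values $w_1, w_2$, shape-regularity gives $|\nabla \tilde w| \lesssim (|w_1| + |w_2|)/h$ on each of the $O(1)$ tetrahedra touching $e$, so their combined contribution is $\lesssim h^3 (|w_1|^2 + |w_2|^2)/h^2 = h(|w_1|^2 + |w_2|^2) \sim \|I^h\Theta_{E_{ijk}}u\|_{L^2(e)}^2$. Summing over the element-edges of $E_{ijk}$ and using that the element tubes overlap with uniformly bounded multiplicity yields $|\tilde w|_{H^1(\Omega_i)}^2 \lesssim \|I^h\Theta_{E_{ijk}}u\|_{L^2(E_{ijk})}^2$, which is the asserted estimate. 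Note that, in contrast to \Cref{sec2:edge-face}, no factor $1 + \log(H/h)$ appears, precisely because the competitor extension is localized to within $O(h)$ of the edge.

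I expect the main obstacle to be bookkeeping rather than analysis: making the explicit extension and its element-wise energy estimate rigorous in the composite, duplicated-degrees-of-freedom setting of $W_i(\Omega_i')$ --- in particular near the endpoints (subdomain vertices) of $E_{ijk}$ and on the duplicated face geometry carried by $\HH_i'$ --- and checking that the full $d_i$-energy of this localized extension, not merely its $H^1$ part, remains controlled by $\|I^h\Theta_{E_{ijk}}u\|_{L^2(E_{ijk})}^2$. Apart from these adaptations, the argument parallels the corresponding estimate in \cite{dd-book}.
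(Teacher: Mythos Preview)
The paper does not give its own proof of this lemma: it simply records it among the ``technical tools'' and refers to \cite{dd-book} (Lemmas 4.17, 4.19, 4.26), with this statement corresponding to Lemma~4.19 there. Your proposed argument --- take the zero extension of the nodal edge data into $\Omega_i$, use shape-regularity to bound the $H^1$ energy of that localized competitor element by element, and invoke the trace/minimisation characterisation of the $H^{1/2}$ seminorm --- is exactly the standard proof from \cite{dd-book}, so your approach is correct and coincides with what the paper appeals to.

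One minor remark: in the paper's usage (see the proof of \Cref{lemma-full-primal}) the quantity $|\HH_i'(\Theta_{E_{ijk}}u)|_{H^{1/2}(\partial\Omega_i)}$ is really just $|I^h\Theta_{E_{ijk}}u|_{H^{1/2}(\partial\Omega_i)}$, since the boundary trace of $\HH_i'$ is prescribed; the $d_i$-harmonic extension plays no role in this particular estimate, so the concern you raise about controlling the full $d_i$-energy of the competitor is not actually needed here.
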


The following Lemma will be useful for the proof considering reduced primal space only containing vertex and edge average constraints:

\begin{lemma}
    Let $\overline{u}_{\E}$ be the average of $u$ over $\E$, an edge of subdomain $\Omega_i$. Let $H_{\E}$ be the diameter of this edge. Then,
    \[(\overline{u}_{\E})^2 \lesssim \frac{1}{H_{\E}} \|u\|^2_{L^2(\E)}.\]
    \label{edge-average}
\end{lemma}

For face terms, we will use the following inequality:

\begin{lemma}
\label{sec2:face}
    Let $F_{ij}$ be a face of a subdomain $\Omega_i$, let $u \in V^h$ and $\overline{u}_{F_{ij}}$ be the average of $u$ over $F_{ij}$. Then,
    \[\|\Theta_{F_{ij}}\|_{H^{1/2}_{00}(F_{ij})} \lesssim \bigg(1 + \log\frac{H}{h}\bigg)H\]
    and
    \[\|I^h\Theta_{F_{ij}}(u - \overline{u}_{F_{ij}})\|^2_{H^{1/2}_{00}(F_{ij})} \lesssim \bigg(1 + \log\frac{H}{h}\bigg)^2|u|^2_{H^{1/2}(\partial\Omega_i)}.\]
\end{lemma}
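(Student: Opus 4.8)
The plan is to follow the classical substructuring arguments of \cite{dd-book} (from which \Cref{sec2:edge-face,sec2:boundary-edge,edge-average,sec2:face} are taken), adapted to the three-dimensional face geometry, treating the two displayed bounds separately. Throughout I would use the characterization of the $H^{1/2}_{00}(F_{ij})$-norm by its three contributions: $\|v\|^2_{L^2(F_{ij})}$, the intrinsic Slobodeckij seminorm $|v|^2_{H^{1/2}(F_{ij})}$, and the boundary-weighted term $\int_{F_{ij}} v(x)^2/\mathrm{dist}(x,\partial F_{ij})\,dx$, exploiting that $v \in H^{1/2}_{00}(F_{ij})$ precisely when its zero extension $\E_{\text{ext}}v$ lies in $H^{1/2}(\partial\Omega_i)$, with equivalent norms.

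For the first bound I would estimate these three contributions for the finite element characteristic function $\Theta_{F_{ij}}$ directly. Since $0 \le \Theta_{F_{ij}} \le 1$ and $\Theta_{F_{ij}}$ is supported on $\overline{F}_{ij}$, the $L^2$ term is $\lesssim |F_{ij}| \sim H^2$. The seminorm is controlled by an inverse inequality on the one-element-wide transition layer along $\partial F_{ij}$ where $\Theta_{F_{ij}}$ drops from $1$ to $0$, which produces one logarithmic factor. The dominant term is the boundary-weighted one: using $\Theta_{F_{ij}} \equiv 1$ at distance $\gtrsim h$ from $\partial F_{ij}$ and $\Theta_{F_{ij}}(x) \lesssim \mathrm{dist}(x,\partial F_{ij})/h$ inside the layer, I integrate in tubular coordinates around the curve $\partial F_{ij}$ (of length $\sim H$) to obtain $\lesssim H\,(1 + \log(H/h))$, which gives the asserted bound on $\|\Theta_{F_{ij}}\|_{H^{1/2}_{00}(F_{ij})}$.

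For the second bound, write $w = u - \overline{u}_{F_{ij}}$, so $w$ has zero average over $F_{ij}$ and $|w|_{H^{1/2}(F_{ij})} = |u|_{H^{1/2}(F_{ij})} \le |u|_{H^{1/2}(\partial\Omega_i)}$. I would then apply a finite-element Leibniz-type estimate bounding $\|I^h(\Theta_{F_{ij}} w)\|^2_{H^{1/2}_{00}(F_{ij})}$ by a combination of $\|I^h w\|^2_{L^\infty(F_{ij})}\,\|\Theta_{F_{ij}}\|^2_{H^{1/2}_{00}(F_{ij})}$ (with the appropriate $H$-scaling), $(1+\log(H/h))\,|w|^2_{H^{1/2}(F_{ij})}$, and a lower-order $L^2$ term, feeding in the first bound of the lemma. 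It then remains to control $\|I^h w\|^2_{L^\infty(F_{ij})}$: by the discrete Sobolev inequality on the two-dimensional surface $\partial\Omega_i$ this is $\lesssim (1+\log(H/h))\big(|u|^2_{H^{1/2}(\partial\Omega_i)} + H^{-1}\|w\|^2_{L^2(\partial\Omega_i)}\big)$, and the $L^2$ term is absorbed via a Poincaré/face-average argument — comparing $\overline{u}_{F_{ij}}$ with the average of $u$ over $\partial\Omega_i$ and using $\|u - \overline{u}_{\partial\Omega_i}\|^2_{L^2(\partial\Omega_i)} \lesssim H\,|u|^2_{H^{1/2}(\partial\Omega_i)}$ (the face analogue of \Cref{edge-average}, or the corresponding estimate in \cite{dd-book}). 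Collecting factors yields $(1+\log(H/h))^2\,|u|^2_{H^{1/2}(\partial\Omega_i)}$.

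The main obstacle is the bookkeeping of the $H$- and $\log(H/h)$-powers in the three-dimensional Leibniz estimate: one must localize every computation to the layer of elements meeting $\partial F_{ij}$ and verify that the scaling on $\|\Theta_{F_{ij}}\|^2_{H^{1/2}_{00}(F_{ij})}$ combines with the $L^\infty$-bound on $w$ to give exactly $(1+\log(H/h))^2$ without a spurious power of $H$ — precisely the delicate point in the classical analysis. The discrete Sobolev inequality on $\partial\Omega_i$ and the face-average Poincaré estimate, while standard, must also be stated with the correct $H$-scaling so that all constants stay independent of $H$, $h$ and the coefficients $\sigma_i$.
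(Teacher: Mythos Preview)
The paper does not prove this lemma at all: it is listed among the ``technical tools'' and the text immediately preceding \Cref{sec2:edge-face} states that ``their proofs can be found in \cite{dd-book} (Lemma 4.17, Lemma 4.19 and Lemma 4.26)''. There is thus no in-paper argument to compare your proposal against; \Cref{sec2:face} is simply quoted from the standard reference.

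Your sketch is essentially the classical substructuring argument from \cite{dd-book}. A couple of minor remarks. First, your computation of the boundary-weighted integral for $\Theta_{F_{ij}}$ actually yields $\|\Theta_{F_{ij}}\|^2_{H^{1/2}_{00}(F_{ij})} \lesssim (1+\log(H/h))\,H$, i.e.\ the bound for the \emph{squared} norm; this is the sharp Toselli--Widlund estimate (Lemma~4.24 there) and is precisely what the paper needs in the proof of \Cref{lem:VE}, so the first display in the statement should be read as carrying a square on the left. Second, for the second inequality your route via a Leibniz-type product bound combined with the discrete Sobolev inequality on $\partial\Omega_i$ is one of the standard variants; the original argument in \cite{dd-book} organizes the estimate slightly differently (decomposing the $H^{1/2}_{00}$ norm directly and localizing to the boundary strip), but both lead to the same $(1+\log(H/h))^2$ factor with correct $H$-scaling. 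Your identification of the delicate point --- the cancellation of $H$-powers between the $\Theta_{F_{ij}}$ bound and the $L^\infty$ bound on $w$ --- is exactly right.
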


Finally, we will consider the following classical, well-known results throughout the proof. They can be found for example in Appendix A of \cite{dd-book}.

\begin{lemma}
    Let $\Lambda \subset \partial\Omega_i$. Then, for $u \in H^{1/2}_{00}$ it holds that 
    \[\|\E_{\text{ext}}u\|^2_{H^{1/2}(\partial \Omega_i)} \lesssim \|u\|^2_{H^{1/2}_{00}(\Lambda)} \lesssim \|\E_{\text{ext}}u\|^2_{H^{1/2}(\partial\Omega_i)}.\]
\end{lemma}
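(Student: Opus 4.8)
The content of the statement is that the set $H^{1/2}_{00}(\Lambda)$, equipped with the intrinsic Lions--Magenes norm, carries a norm equivalent to $\|\E_{\text{ext}}u\|_{H^{1/2}(\partial\Omega_i)}$. So the first step is to recall that for a relatively open $\Lambda\subset\partial\Omega_i$ with Lipschitz relative boundary $\partial\Lambda$, the norm of $H^{1/2}_{00}(\Lambda)$ is equivalent to
\[\|u\|_{L^2(\Lambda)}^2 + |u|_{H^{1/2}(\Lambda)}^2 + \int_\Lambda \frac{u(x)^2}{\text{dist}(x,\partial\Lambda)}\,ds_x,\]
where $|\cdot|_{H^{1/2}(\Lambda)}$ is the Gagliardo seminorm, i.e. the double surface integral of $|u(x)-u(y)|^2/|x-y|^{n}$ over $\Lambda\times\Lambda$ (with $n$ the ambient dimension, here $n=3$, so that $n-1$ is the surface dimension). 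With this in hand both inequalities reduce to splitting the double integral defining $|\E_{\text{ext}}u|_{H^{1/2}(\partial\Omega_i)}^2$ according to whether the integration points lie in $\Lambda$ or in $\partial\Omega_i\setminus\Lambda$, combined with the elementary geometric estimate
\[\int_{\partial\Omega_i\setminus\Lambda}\frac{ds_y}{|x-y|^{n}}\ \sim\ \frac{1}{\text{dist}(x,\partial\Lambda)},\qquad x\in\Lambda.\]

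For the left inequality I would write $|\E_{\text{ext}}u|_{H^{1/2}(\partial\Omega_i)}^2$ as a sum of four pieces. The $\Lambda\times\Lambda$ piece equals $|u|_{H^{1/2}(\Lambda)}^2$; the $(\partial\Omega_i\setminus\Lambda)\times(\partial\Omega_i\setminus\Lambda)$ piece vanishes because $\E_{\text{ext}}u\equiv0$ there; and the two mixed pieces coincide by symmetry and, using $\E_{\text{ext}}u(y)=0$, reduce to
\[\int_\Lambda u(x)^2\Big(\int_{\partial\Omega_i\setminus\Lambda}|x-y|^{-n}\,ds_y\Big)\,ds_x\ \lesssim\ \int_\Lambda \frac{u(x)^2}{\text{dist}(x,\partial\Lambda)}\,ds_x\]
by the geometric estimate. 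Adding $\|\E_{\text{ext}}u\|_{L^2(\partial\Omega_i)}^2=\|u\|_{L^2(\Lambda)}^2$ yields $\|\E_{\text{ext}}u\|_{H^{1/2}(\partial\Omega_i)}^2\lesssim\|u\|_{H^{1/2}_{00}(\Lambda)}^2$.

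For the right inequality, the $L^2$ part and the $H^{1/2}(\Lambda)$ seminorm part of $\|u\|_{H^{1/2}_{00}(\Lambda)}^2$ are bounded directly by the corresponding quantities for $\E_{\text{ext}}u$ on all of $\partial\Omega_i$, since $\Lambda\subset\partial\Omega_i$ and $\E_{\text{ext}}u|_\Lambda=u$. The only remaining term is the weighted one: for $x\in\Lambda$, the Lipschitz geometry provides a cap $C_x:=B\big(x,2\,\text{dist}(x,\partial\Lambda)\big)\cap(\partial\Omega_i\setminus\Lambda)$ of surface measure $\gtrsim \text{dist}(x,\partial\Lambda)^{n-1}$ on which $\E_{\text{ext}}u=0$ and $|x-y|\lesssim\text{dist}(x,\partial\Lambda)$, whence
\[\frac{u(x)^2}{\text{dist}(x,\partial\Lambda)}\ \lesssim\ \int_{C_x}\frac{|u(x)-\E_{\text{ext}}u(y)|^2}{|x-y|^{n}}\,ds_y,\]
and integrating over $x\in\Lambda$ gives $\int_\Lambda u(x)^2/\text{dist}(x,\partial\Lambda)\,ds_x\lesssim|\E_{\text{ext}}u|_{H^{1/2}(\partial\Omega_i)}^2$. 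Summing the three contributions gives $\|u\|_{H^{1/2}_{00}(\Lambda)}^2\lesssim\|\E_{\text{ext}}u\|_{H^{1/2}(\partial\Omega_i)}^2$.

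The main obstacle is precisely the geometric estimate for $\int_{\partial\Omega_i\setminus\Lambda}|x-y|^{-n}\,ds_y$ and the existence of the cap $C_x$ with the stated measure lower bound: since $\partial\Omega_i$ is only Lipschitz and $\partial\Lambda$ is a Lipschitz $(n-2)$-dimensional set, these require a finite covering of a neighborhood of $\partial\Lambda$ by charts in which $\partial\Omega_i$ is a graph and $\Lambda$ a half-space-like region, after which the bounds reduce to elementary one- and two-dimensional computations; one also needs $\text{dist}(x,\partial\Lambda)\sim\text{dist}(x,\partial\Omega_i\setminus\Lambda)$, again a consequence of the Lipschitz charts. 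All of this is classical (Appendix A of \cite{dd-book}, and Lions--Magenes), so in the paper it suffices to cite it; the sketch above indicates the mechanism.
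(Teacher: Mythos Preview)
Your sketch is correct and follows the classical Lions--Magenes argument. Note, however, that the paper does not actually prove this lemma: it is listed among the ``classical, well-known results'' and simply referred to Appendix~A of \cite{dd-book}. In fact, the paper defines $H^{1/2}_{00}(\Lambda)$ only as the set $\{u\in H^{1/2}(\Lambda): \E_{\text{ext}}u\in H^{1/2}(\partial\Omega_i)\}$ without explicitly fixing a norm on it; you have correctly identified the intended norm as the Lions--Magenes one (equivalently the $[H^1_0,L^2]_{1/2}$ interpolation norm), for which the equivalence is a genuine statement rather than a tautology. Since the paper offers no argument of its own, your write-up is strictly more detailed than what appears there; the mechanism you outline---splitting the Gagliardo double integral over $\partial\Omega_i\times\partial\Omega_i$ according to $\Lambda$ versus its complement, and controlling the cross terms by the weighted $L^2$ piece via the kernel estimate $\int_{\partial\Omega_i\setminus\Lambda}|x-y|^{-3}\,ds_y\sim \text{dist}(x,\partial\Lambda)^{-1}$---is exactly the standard proof one finds in the cited references.
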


\begin{theorem}[Trace theorem] Let $\Omega_i$ be a polyhedral domain. Then,
\[|u|_{H^{1/2}(\Gamma_i)} \sim |\HH^{\Delta}_iu_{\Gamma}|^2_{H^1(\Omega_i)}.\]

\end{theorem}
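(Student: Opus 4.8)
This is the classical trace estimate for discrete harmonic extensions (Appendix~A of \cite{dd-book}), read with matching powers $|u|^2_{H^{1/2}(\Gamma_i)} \sim |\HH^\Delta_i u_\Gamma|^2_{H^1(\Omega_i)}$; the plan is to prove the two inequalities separately by assembling standard ingredients --- continuity of the trace map, a bounded extension operator, and the energy-minimization property of $\HH^\Delta_i$ --- while checking that every constant stays independent of $h$ and $H_i$.

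For $|u_\Gamma|^2_{H^{1/2}(\Gamma_i)} \lesssim |\HH^\Delta_i u_\Gamma|^2_{H^1(\Omega_i)}$ I would set $v := \HH^\Delta_i u_\Gamma$, whose trace on $\Gamma_i$ is $u_\Gamma$ by definition, and apply the (scale-invariant, quotient-space form of the) trace inequality on the polyhedral domain, $|v|_{H^{1/2}(\partial\Omega_i)} \lesssim |v|_{H^1(\Omega_i)}$, followed by boundedness of the restriction $H^{1/2}(\partial\Omega_i) \to H^{1/2}(\Gamma_i)$. The $H^{1/2}$ seminorm annihilates constants, so no Poincar\'e shift is needed here, and, notably, this direction does not use discrete harmonicity at all: it holds for any finite element function carrying the interface data $u_\Gamma$.

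For the reverse bound I would use that $\HH^\Delta_i u_\Gamma$ minimizes $|\cdot|^2_{H^1(\Omega_i)}$ (equivalently $a_i(\cdot,\cdot)$, since $\sigma_i$ is constant on $\Omega_i$) over all finite element functions agreeing with $u_\Gamma$ on $\Gamma_i$ --- the natural condition on $\partial\Omega_i \setminus \Gamma_i$ only decreases the energy, so it is harmless --- and then it suffices to exhibit one admissible competitor of controlled energy. I would take a continuous extension $\widetilde u \in H^1(\Omega_i)$ of $u_\Gamma$ (a right inverse of the trace, available on the polyhedral $\Omega_i$) with $|\widetilde u|_{H^1(\Omega_i)} \lesssim |u_\Gamma|_{H^{1/2}(\Gamma_i)}$ after a Poincar\'e shift by the mean of $u_\Gamma$, and set $w := \Pi_h \widetilde u$ with $\Pi_h$ a Scott--Zhang / Cl\'ement-type quasi-interpolant, which is $H^1$-stable and can be arranged to reproduce the finite element trace $u_\Gamma$ on $\Gamma_i$; then $|\HH^\Delta_i u_\Gamma|_{H^1(\Omega_i)} \le |w|_{H^1(\Omega_i)} \lesssim |\widetilde u|_{H^1(\Omega_i)} \lesssim |u_\Gamma|_{H^{1/2}(\Gamma_i)}$.

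The main obstacle --- and the reason the statement is cited rather than reproved --- is precisely this last step: nodal interpolation is not $H^1$-bounded in three dimensions, so a \emph{smoothed} interpolant is mandatory; it must be matched exactly to the prescribed boundary values on $\Gamma_i$; and one must confirm that the quasi-interpolation constant, the continuous-extension constant, and the Poincar\'e constant on $\Gamma_i$ all scale out correctly, which is done by transforming to a reference subdomain and tracking powers of $H_i$. Everything else is routine.
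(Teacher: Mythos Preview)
The paper does not actually prove this statement: it is listed among the ``classical, well-known results'' and referred to Appendix~A of \cite{dd-book}, with no argument given. Your sketch---trace inequality in one direction, energy minimization of the discrete harmonic extension plus a bounded right inverse of the trace (realized at the discrete level via a Scott--Zhang/Cl\'ement quasi-interpolant) in the other---is precisely the standard route taken in that reference, so your proposal is correct and aligned with the cited source; there is nothing further to compare.
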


\begin{lemma}\label{lemma-A17}
    Let $\Omega \subset \mathbb R^3$ be a Lipschitz continuous polyhedron and let $\mathcal F \subset \partial \Omega$ be one of the faces of $\Omega$ with diameter $H$. Then, there exists a constant $C$ that depends only on the shape of $\mathcal F$ but not on its size, such that
    \begin{equation*}
        \| u \|^2_{L^2 (\mathcal F)} \leq C \, H | u |^2_{H^{1/2} (\mathcal F)},
    \end{equation*}
    if $u \in H^{1/2} (\mathcal F)$ either has a vanishing mean value on $\mathcal F$ or belongs to $H^{1/2}_{00} (\mathcal F)$.
\end{lemma}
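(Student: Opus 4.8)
The plan is to combine a dilation (scaling) argument with a compactness argument on a fixed reference face. The asserted inequality is invariant in form under rigid motions, and both admissible hypotheses on $u$ --- vanishing mean value on $\mathcal F$, or membership in $H^{1/2}_{00}(\mathcal F)$ --- are preserved under affine rescalings of $\mathcal F$. Hence it suffices to (i) establish the estimate with some constant $\hat C$ on a reference face $\hat{\mathcal F}$ of unit diameter having the same shape as $\mathcal F$, and (ii) track the powers of $H$ that appear when transporting the two sides of the estimate from $\hat{\mathcal F}$ to $\mathcal F$.

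For step (ii), write $\mathcal F = \Phi(\hat{\mathcal F})$ with $\Phi(\hat x) = H Q \hat x + b$ a similarity, $Q$ orthogonal, and set $u = \hat u \circ \Phi^{-1}$. Since $\mathcal F$ is two-dimensional, the surface measure scales as $ds = H^2\, d\hat s$, giving $\|u\|^2_{L^2(\mathcal F)} = H^2 \|\hat u\|^2_{L^2(\hat{\mathcal F})}$. For the Sobolev--Slobodeckij seminorm on a two-dimensional surface the kernel exponent is $2 + 2\cdot\tfrac12 = 3$, so
\begin{align*}
|u|^2_{H^{1/2}(\mathcal F)} &= \int_{\mathcal F}\int_{\mathcal F} \frac{|u(x)-u(y)|^2}{|x-y|^{3}}\, ds_x\, ds_y \\
&= \int_{\hat{\mathcal F}}\int_{\hat{\mathcal F}} \frac{|\hat u(\hat x)-\hat u(\hat y)|^2}{H^{3}|\hat x-\hat y|^{3}}\, H^2\, d\hat s_x\, H^2\, d\hat s_y = H\, |\hat u|^2_{H^{1/2}(\hat{\mathcal F})}.
\end{align*}
Combining, $\|u\|^2_{L^2(\mathcal F)} = H^2\|\hat u\|^2_{L^2(\hat{\mathcal F})} \le H^2 \hat C\, |\hat u|^2_{H^{1/2}(\hat{\mathcal F})} = H^2 \hat C\, H^{-1} |u|^2_{H^{1/2}(\mathcal F)} = \hat C\, H\, |u|^2_{H^{1/2}(\mathcal F)}$, which is the claim with $C = \hat C$ depending only on $\hat{\mathcal F}$, i.e. only on the shape of $\mathcal F$ and not on $H$.

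It remains to prove the estimate on $\hat{\mathcal F}$, for which I would use the standard Peetre--Tartar / compactness argument. The embedding $H^{1/2}(\hat{\mathcal F}) \hookrightarrow L^2(\hat{\mathcal F})$ is compact (Rellich--Kondrachov for fractional spaces on a bounded Lipschitz domain), and $|\cdot|_{H^{1/2}(\hat{\mathcal F})}$ vanishes exactly on the constants. In the vanishing-mean-value case let $X$ be the closed subspace of $u \in H^{1/2}(\hat{\mathcal F})$ with $\int_{\hat{\mathcal F}} u\, ds = 0$, and suppose for contradiction that there is a sequence $u_n \in X$ with $\|u_n\|_{L^2} = 1$ and $|u_n|_{H^{1/2}} \to 0$. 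Then $(u_n)$ is bounded in $H^{1/2}$, so along a subsequence it converges weakly in $H^{1/2}$ and, by compactness, strongly in $L^2$ to some $u \in X$ with $\|u\|_{L^2} = 1$; weak lower semicontinuity of the seminorm forces $|u|_{H^{1/2}} = 0$, so $u$ is a nonzero constant, contradicting $\int u\, ds = 0$. Hence $\|u\|_{L^2(\hat{\mathcal F})}^2 \le \hat C\, |u|^2_{H^{1/2}(\hat{\mathcal F})}$ on $X$. The $H^{1/2}_{00}$ case is identical after replacing $X$ by the closed subspace $H^{1/2}_{00}(\hat{\mathcal F}) \subset H^{1/2}(\hat{\mathcal F})$: the only constant lying in $H^{1/2}_{00}(\hat{\mathcal F})$ is the zero function, so the same contradiction applies.

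The routine but essential points to verify are the compactness of the fractional embedding on the Lipschitz face $\hat{\mathcal F}$, the weak closedness in $H^{1/2}$ of the two subspaces (for $H^{1/2}_{00}$ one uses that it is a closed, hence weakly closed, subspace), and the weak lower semicontinuity of the seminorm; this is where the Lipschitz regularity of $\partial\Omega$ enters. I expect no genuine obstacle beyond this bookkeeping --- the only mildly delicate point would be a uniform choice of $\hat C$ over an admissible family of face shapes, but since the lemma fixes the shape of $\mathcal F$ this is not needed. Alternatively, one may simply invoke Lemma A.17 of \cite{dd-book} verbatim.
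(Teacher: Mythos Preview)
Your argument is correct: the scaling computation is right (two-dimensional surface measure contributes $H^{2}$ to each integral, the Slobodeckij kernel contributes $H^{-3}$, yielding the factor $H$ in front of the seminorm), and the compactness/Peetre--Tartar step on the reference face is the standard way to obtain Poincar\'e--Friedrichs inequalities of this type. The points you flag as ``routine but essential'' are indeed routine for a bounded Lipschitz face.

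Regarding comparison with the paper: there is nothing to compare. The paper does not prove this lemma at all; it is listed among the ``classical, well-known results'' quoted from Appendix~A of \cite{dd-book} and invoked without argument. Your closing remark that one may simply cite Lemma~A.17 of \cite{dd-book} is therefore exactly what the paper does. Your write-up supplies a self-contained proof where the paper gives only a citation, which is more than was required.
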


\section{Bound for the Jump Operator} \label{sec:proof}
The main tool in the theory of dual-primal iterative substructuring methods is given by the \textbf{jump operator} $P_D:W(\Gamma') \rightarrow W(\Gamma')$, whose action on a given $u \in W(\Gamma')$ is given by $P_D = I - E_D$, where $E_D$ is the averaging operator defined by
\begin{equation}
    (E_Du)_i(x) = \bigg( \sum_{j \in \N_x} \delta_j^{\dag}(x) (u_j)_i(x), \Big\{\sum_{j \in \N_x} \delta_j^{\dag}(x) (u_j)_{j_0}(x) \Big\}_{j_0 \in \F_i^0, j_0 \in (k,l) \in \E_i^0} \bigg).
\end{equation}
For this projection, for $x \in \Gamma_i$ it holds that
\begin{equation*}
    \begin{split}
        ((E_Du)_i)_i(x) & = \sum_{j \in \N_x} \delta_j^{\dag}(x) (u_j)_i(x) = ((E_D u)_{j_0})_i(x) \quad \forall j_0 \in \N_x, \\
        ((E_Du)_i)_{j_0}(x) & = \sum_{j \in \N_x} \delta_j^{\dag}(x) (u_j)_{j_0}(x) = ((E_Du)_{j_0})_{j_0}(x) \quad \forall j_0 \in \F_i^0, j_0 \in (k,l) \in \E_i^0
    \end{split}
\end{equation*}
and it induces the local action of the jump operator for the considered EMI problem as
\begin{equation}
    \begin{split}
    (P_Du)_i(x) = \bigg( & \sum_{j \in \N_x} \delta_j^{\dag}(x)((u_i)_i(x) - (u_j)_i(x)), \\
    & \Big\{ \sum_{j \in \N_x} \delta_j^{\dag}(x)((u_i)_{j_0}(x) - (u_j)_{j_0}(x))\Big\}_{j_0 \in \F_i^0, j_0 \in (k,l) \in \E_i^0} \bigg).
    \end{split}
\end{equation}
We recall that $((E_D u)_i)_i$ and $((E_D u)_i)_{j_0}$ are the restriction of $(E_D u)_i$ to $\overline \Omega_i$ and $\overline F_{j_0 i}$ respectively.

The main contribution of this work is a bound on the norm of this operator with respect to the seminorm $|\cdot|^2_{S_i'}$ induced by the Schur operator:
\[|v|_{S'}^2 \coloneqq \sum_{i = 0}^N |v_i|_{S_i'}^2, \quad \quad |v_i|_{S_i'}^2 \coloneqq v_i^TS_i'v_i, \quad \text{for } v_i \in W(\Gamma_i').\]

\begin{lemma}
\label{lemma-full-primal}
    Let the primal space $\widetilde W_{\Pi}(\Gamma')$ be spanned by the vertex nodal degrees finite element functions and the face and edge averages. If the jump operator $P_D$ is scaled by the $\rho$-scaling as defined in (\ref{rho-scaling}), then
    \begin{equation}
        |P_Du|_{S'}^2 \leq C \bigg(1 + \log \frac{H}{h} \bigg)^2 |u|_{S'}^2
    \end{equation}
    holds for all $u \in \widetilde W(\Gamma')$ with $C$ constant and independent of all parameters of the problem. Here, $h$ is the mesh size.
\end{lemma}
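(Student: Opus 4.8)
The plan is to follow the classical dual-primal substructuring strategy (as in Chapter 6 of \cite{dd-book} and the 2D argument of \cite{2D-proof}), reducing the global bound to a sum of local estimates on each $\Omega_i'$. First I would exploit the fact that $P_D u$ decomposes subdomain-wise, so that $|P_D u|_{S'}^2 = \sum_{i=0}^N |(P_D u)_i|_{S_i'}^2$, and use the minimization property \eqref{schur-min-property} of the Schur form together with the equivalence from \Cref{sec2:edge-face}--\Cref{lemma-A17} and the trace theorem to replace $|(P_D u)_i|_{S_i'}^2$ by $\tau\sigma_M$ times an $H^{1/2}(\partial\Omega_i')$-type seminorm of a suitable discrete harmonic extension plus the $L^2$ membrane penalty terms. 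The key algebraic step is to write each component of $(P_D u)_i$ at a node $x$ as $\sum_{j \in \N_x}\delta_j^\dagger(x)\big((u_i)_\bullet(x) - (u_j)_\bullet(x)\big)$ and, since $u \in \widetilde W(\Gamma')$ is continuous at the vertices $\V_i'$, rewrite each difference $(u_i)_\bullet - (u_j)_\bullet$ as a telescoping combination whose vertex values vanish; this lets me localize the contribution to faces and edges via the standard partition-of-unity cut-off functions $\Theta_{F_{ij}}$ and $\Theta_{E_{ijk}}$.

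Concretely, I would split $(P_D u)_i = \sum_{F_{ij}} I^h\Theta_{F_{ij}}(P_D u)_i + \sum_{E_{ijk}} I^h\Theta_{E_{ijk}}(P_D u)_i$ (a standard face-edge-vertex partition, with vertex terms controlled directly by the $\rho$-scaling bound \eqref{sigma-ineq} and the fact that $E_D$ preserves vertex values). For the face piece, because $u\in\widetilde W(\Gamma')$ forces $(\overline{u}_i)_{i,F_{ij}}=(\overline{u}_j)_{i,F_{ij}}$, the quantity $(u_i)_i - (u_j)_i$ restricted to $F_{ij}$ has vanishing mean, so \Cref{sec2:face} applies and yields a $\big(1+\log(H/h)\big)^2$ factor times $|u_i - u_j|_{H^{1/2}(\partial\Omega_i)}^2$ (and its counterpart on $\Omega_j$). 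For the edge piece, I would use \Cref{sec2:boundary-edge} to bound the $H^{1/2}(\partial\Omega_i)$-seminorm of the extension by the $L^2(E_{ijk})$-norm of the edge function, then apply \Cref{sec2:edge-face} — using that the edge averages agree across the (at most three) substructures sharing $E_{ijk}$, so $(P_D u)_i$ has vanishing average on $E_{ijk}$ — to pick up another $\big(1+\log(H/h)\big)$ factor and pass to an $H^{1/2}(F_{ij})$ face seminorm of an adjacent face. Throughout, the $\rho$-scaling weights are absorbed using \eqref{sigma-ineq}: each term $\sigma_i(\delta_j^\dagger)^2 \le \min\{\sigma_i,\sigma_j\}$ ensures the weighted sum of local seminorms of the \emph{neighbors'} traces is controlled by $\sum_j |u_j|_{S_j'}^2$, giving the partition-of-unity cancellation that removes dependence on the number of subdomains and on the coefficient jumps. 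Summing over all faces, edges and subdomains and using the finite overlap (at most three substructures per edge, bounded number of faces per subdomain by shape-regularity) collapses everything to $C\big(1+\log(H/h)\big)^2 |u|_{S'}^2$.

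The main obstacle I anticipate is the careful bookkeeping of the \emph{duplicated} face and edge degrees of freedom that are intrinsic to the EMI/composite-DG setting: unlike the conforming case, $(P_D u)_i$ has components both on $\overline\Omega_i$ and on the "ghost" faces $\overline F_{j_0 i}\subset\partial\Omega_j$, and one must verify that the discrete harmonic extension $\HH_i'$, the cut-off functions, and the average constraints in \Cref{sec2:VEF} interact correctly so that each ghost-face/ghost-edge term is charged to the \emph{right} neighbor's Schur seminorm. This is exactly the three-dimensional analogue of the delicate step in \cite{2D-proof}, now complicated by the presence of edges; reconciling the edge constraints with the face constraints — in particular ensuring the edge term can always be bounded by a face term of a face that is genuinely adjacent in the triangulated sense — is where the Remark's assumption (at most three substructures at any point) does the real work, and I would need to argue it cleanly rather than wave at it.
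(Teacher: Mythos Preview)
Your proposal is correct and follows essentially the same route as the paper's own proof: localize to $|(P_D u)_i|_{S_i'}^2$, split via the face/edge/vertex cut-offs $\Theta_*$, observe that the vertex contribution vanishes because $u$ is vertex-continuous, then handle the face piece with \Cref{sec2:face} (after subtracting the common face average and invoking \eqref{sigma-ineq}) and the edge piece with \Cref{sec2:boundary-edge} followed by \Cref{sec2:edge-face} (after subtracting the common edge average), while the $C_m$ membrane term is dispatched exactly as in the 2D paper. The only cosmetic difference is that the paper splits $(u_i)_i - (u_j)_i$ into the two mean-zero pieces $\big((u_i)_i-(\overline u_i)_{i,F_{ij}}\big)$ and $\big((u_j)_i-(\overline u_j)_{i,F_{ij}}\big)$ \emph{before} applying \Cref{sec2:face}, rather than applying it to the difference directly; this matters because $(u_j)_i$ lives only on the ghost face $F_{ij}$ and not on all of $\partial\Omega_i$, which is precisely the duplicated-DOF bookkeeping you flag as the main obstacle---so make that split explicit when you write it up.
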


\begin{proof}
    As in \cite{2D-proof}, we need only to consider the local contributions
    \begin{equation*}
        \begin{split}
            v_i & \coloneqq (P_Du)_i \\
            & = \bigg( \sum_{j \in \N_x} I^h \delta_j^{\dag}((u_i)_i - (u_j)_i), \Big\{\sum_{j \in \N_x} I^h \delta_j^{\dag}((u_i)_{j_0} - (u_j)_{j_0})\Big\}_{j_0 \in \F_i^0, j_0 \in (k,l) \in \E_i^0} \bigg) \\
            & = \sum_{\F}I^h(\Theta_{\F} v_i) + \sum_{\E} I^h(\Theta_{\E}v_i) + \sum_{\V} I^h(\Theta_{\V} v_i).
        \end{split}
    \end{equation*}
    Here $\Theta_* = (\theta_*^i, \theta_*^{j_1}, \dots, \theta_*^{j_k})$ is the characteristic finite element function associated to the faces ($k = 1$), edges ($k = 2$) and vertices ($* \in \{\F, \E, \V\}$) and $I^h$ is the usual finite element interpolant
    \[I^h: \mathcal{C}^0(\Omega_i') \rightarrow W_i(\Omega_i').\]
    Since the vertices are in the primal space, the vertex term vanishes and we have to estimate the contributions given by the faces and edges
    \[|v_i|_{S_i'}^2 \lesssim \sum_{\F}|I^h(\Theta_{\F}v_i)|_{S_i'}^2 + \sum_{\E}|I^h(\Theta_{\E}v_i)|_{S_i'}^2.\]

    For $* \in \{\E, \F\}$, the ellipticity property of the bilinear form $d_i$ as defined in (\ref{eq:bilinear-forms}) gives
    \begin{equation}
    \label{first-ineq}
        \begin{split}
            |I^h(\Theta_*v_i)|_{S_i'}^2 & = s_i'(I^h(\Theta_* v_i), I^h(\Theta_* v_i)) = d_i(\HH_i' I^h(\Theta_* v_i), \HH_i' I^h(\Theta_* v_i)) \\
            & \leq \tau \sigma_i |\HH_i^{\Delta} I^h(\Theta_* v_i)|_{H^1(\Omega_i)}^2 + \sum_{j \in \F_i^0} \frac{C_m}{2}\|{I^h}(v_i - v_j)\|_{L^2(F_{ij})}^2.
        \end{split}
    \end{equation}

    For the second term in (\ref{first-ineq}), we use the same argumentation as in \cite{2D-proof}, making use of the explicit representation of $v_i$ and $v_j$ on the face $F_{ij}$ (see \cite{2D-proof} for an analogous in two-dimensions), to get 
    \begin{equation}
    \label{C_m-ineq}
        \|I^h(v_i - v_j)\|_{L^2(F_{ij})} \lesssim |u_i|_{S_i'}^2 + |u_j|_{S_j'}^2.
    \end{equation}

    For the first term, we have to distinguish between edge and face terms.

    \textit{Edge Terms:} Consider the edge $\E = E_{ijk}$. We have to consider three terms contributing to
    \begin{equation*}
        \label{full-edge}
        \sigma_i |\HH_i^{\Delta} (I^h(\Theta_{E_{ijk}} v_i))|_{H^1(\Omega_i)}^2 \lesssim \sum_{l \in \{i,j,k\}} \sigma_i (\delta_l^{\dag})^2 \|\HH_i^{\Delta}(I^h \Theta_{E_{ijk}}((u_i)_i - (u_l)_i))\|_{H^1(\Omega_i)}^2.
    \end{equation*}
    For $l = i$, the term vanishes and for $l \in \{j,k\}$ we get
    \begin{equation*}
        \begin{split}
            & \sigma_i (\delta_l^{\dag})^2 \|\HH_i^{\Delta}(I^h \Theta_{\E}((u_i)_i - (u_l)_i))\|_{H^1(\Omega_i)}^2 \\
            & \lesssim \sigma_i (\delta_l^{\dag})^2 \|I^h \Theta_{\E}((u_i)_i - (u_l)_i)\|_{H^{1 / 2}(\partial \Omega_i)}^2 \\
            & \lesssim \sigma_i (\delta_l^{\dag})^2 \|I^h(\Theta_{\E}((u_i)_i - (\overline{u}_i)_{i, \E}) - \Theta_{\E}((u_l)_i - (\overline{u}_l)_{i, \E}))\|_{L^2(\E)}^2 \\
            & \lesssim \sigma_i \| (u_i)_i - (\overline{u}_i)_{i, \E}\|_{L^2(\E)}^2 + \sigma_l \| (u_l)_i - (\overline{u}_l)_{i, \E}\|_{L^2(\E)}^2 \\
            & \lesssim \bigg( 1 + \log \frac{H}{h}\bigg) \bigg( \sigma_i |(u_i)_i|_{H^{1 / 2}(F_{il_1})}^2 + \sigma_l |(u_l)_i|_{H^{1 / 2}(F_{ll_2})}^2\bigg) \\
            & \lesssim \bigg(1 + \log \frac{H}{h}\bigg) \big(\frac{1}{\tau}|u_i|_{S_i'}^2 + \frac{1}{\tau}|u_{l}|_{S_{l}'}^2\big)
        \end{split}
    \end{equation*}
    where we use \Cref{sec2:boundary-edge}, the equal edge averages, (\ref{sigma-ineq}) and then \Cref{sec2:edge-face} for some face $F_{il_1}$ of $\Omega_i$ and some face $F_{ll_2}$ of $\Omega_l$.
    
    Summing up, we have shown that 
    \begin{equation}
        \label{edge-ineq}
        |I^h\Theta_{E_{ijk}}v_i|_{S_i'}^2 \lesssim \bigg(1 + \log \frac{H}{h}\bigg) \sum_{l \in \{i,j,k\}}|u_l|_{S_l'}^2.
    \end{equation}

    \textit{Face Terms:} In the same way, considering the face $\F = F_{ij}$ between subdomains $\Omega_i$ and $\Omega_j$ we get
    \begin{equation}
    \label{face-ineq}
        \begin{split}
            \sigma_i |\HH_i^{\Delta}(I^h(\Theta_{F_{ij}} v_i))&|_{H^1(\Omega_i)}^2 \\
            & \lesssim \sigma_i (\delta_j^{\dag})^2 \|\HH_i^{\Delta}(I^h \Theta_{F_{ij}}((u_i)_i - (u_j)_i))\|_{H^1(\Omega_i)}^2 \\
            & \lesssim \sigma_i (\delta_j^{\dag})^2 \|I^h \Theta_{F_{ij}}((u_i)_i - (u_j)_i)\|_{H^{1 / 2}(\Gamma_i)}^2 \\
            & \lesssim \sigma_i (\delta_j^{\dag})^2 \|I^h\Theta_{F_{ij}}((u_i)_i - (u_j)_i)\|_{H_{00}^{1 / 2}(F_{ij})}^2 \\
            & \lesssim \sigma_i \|I^h\Theta_{F_{ij}}((u_i)_i - (\overline u_i)_{i, F_{ij}})\|_{H_{00}^{1 / 2}(F_{ij})}^2 \\&\qquad \qquad + \sigma_j \|I^h\Theta_{F_{ij}}((u_j)_i - (\overline u_j)_{i, F_{ij}})\|_{H_{00}^{1 / 2}(F_{ij})}^2 \\
            & \lesssim \bigg( 1 + \log \frac{H}{h} \bigg)^2 \big(\sigma_i |(u_i)_i|_{H^{1 / 2}(F_{ij})}^2 + \sigma_j |(u_j)_i)|_{H^{1 / 2}(F_{ij})}^2 \big) \\
            & \lesssim \bigg( 1 + \log \frac{H}{h}\bigg)^2 \big(\frac{1}{\tau}|u_i|_{S_i'}^2 + \frac{1}{\tau}|u_j|_{S_j'}^2 \big)
        \end{split}
    \end{equation}
    where we use (\ref{sigma-ineq}) and \Cref{sec2:face}.

    Adding up over all substructures, faces and edges, (\ref{C_m-ineq}), (\ref{edge-ineq}) and (\ref{face-ineq}) finally give the inequality
    \[
    |P_Du|_{S'}^2 \lesssim \bigg( 1 + \log \frac{H}{h} \bigg)^2 |u|_{S'}^2.
    \]
\end{proof}

We can show a similar result considering the space $\widetilde{W}_{VE}$, reaching the same asymptotic behavior with fewer primal constraints:

\begin{lemma}
\label{lem:VE}
    Let the primal space $\widetilde W_{\Pi}(\Gamma')$ be spanned by the vertex nodal degrees finite element functions and the edge averages. If the jump operator $P_D$ is scaled by the $\rho$-scaling as defined in (\ref{rho-scaling}), then
    \begin{equation}
        |P_Du|_{S'}^2 \leq C \bigg(1 + \log \frac{H}{h} \bigg)^2 |u|_{S'}^2
    \end{equation}
    holds for all $u \in \widetilde W_{VE}(\Gamma')$ with $C$ constant and independent of all parameters of the problem.
\end{lemma}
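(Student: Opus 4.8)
The plan is to mirror the proof of \Cref{lemma-full-primal} almost verbatim, tracking only the differences that arise from dropping the face averages from the primal space. As before, it suffices to bound the local contributions $v_i \coloneqq (P_D u)_i$, which we decompose with the finite element interpolant as $v_i = \sum_{\F} I^h(\Theta_{\F} v_i) + \sum_{\E} I^h(\Theta_{\E} v_i) + \sum_{\V} I^h(\Theta_{\V} v_i)$. The vertex terms again vanish because the vertices remain in the primal space, and the edge terms are handled exactly as in \eqref{edge-ineq}, since the edge averages are still imposed and nothing in that part of the argument used the face averages. The only genuinely new work is the face contribution, where we can no longer subtract the face average $(\overline{u}_i)_{i,F_{ij}}$ to exploit the second estimate of \Cref{sec2:face}.

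First I would handle the face term $\sigma_i |\HH_i^{\Delta}(I^h(\Theta_{F_{ij}} v_i))|_{H^1(\Omega_i)}^2$ by splitting, as in \eqref{full-edge}, into the contributions from $l \in \{i,j\}$ (the $l=i$ term vanishing), and on each face passing through the trace theorem and the $H^{1/2}_{00}$ norm equivalence to reduce to estimating $\sigma_i(\delta_j^\dag)^2 \|I^h \Theta_{F_{ij}}((u_i)_i - (u_j)_i)\|_{H_{00}^{1/2}(F_{ij})}^2$. The idea is that $\Theta_{F_{ij}}$ itself can be estimated by the first bound in \Cref{sec2:face}, namely $\|\Theta_{F_{ij}}\|_{H^{1/2}_{00}(F_{ij})} \lesssim (1 + \log\frac{H}{h}) H$, and then the nodal product $I^h\Theta_{F_{ij}} w$ can be controlled by $\|w\|_{L^\infty}$ times this quantity, after which $\|w\|_{L^\infty(F_{ij})}$ on a finite element function is bounded by $(1 + \log\frac{H}{h})^{1/2}$ times an $H^{1/2}$ seminorm via a standard discrete Sobolev inequality. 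Alternatively, and more in the spirit of the edge argument, one can still subtract \emph{a} constant — the edge average $(\overline{u}_i)_{i,\E}$ on a bounding edge $\E$ of $F_{ij}$, which by the edge-average constraint is common to the relevant subdomains — and use \Cref{sec2:edge-face} together with \Cref{lemma-A17} (applied to $H^{1/2}_{00}$ functions) to absorb the $L^2$ norm on $F_{ij}$ into $H$ times the $H^{1/2}$ seminorm. Either route produces the bound $\sigma_i(\delta_j^\dag)^2 \|\cdots\|_{H_{00}^{1/2}(F_{ij})}^2 \lesssim (1 + \log\frac{H}{h})^2 (\sigma_i |(u_i)_i|_{H^{1/2}(F_{ij})}^2 + \sigma_j |(u_j)_i|_{H^{1/2}(F_{ij})}^2)$ after invoking \eqref{sigma-ineq}, and then \eqref{schur-min-property} converts the right-hand side into $\frac{1}{\tau}(|u_i|_{S_i'}^2 + |u_j|_{S_j'}^2)$.

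The main obstacle is precisely this face estimate: without the face average, the clean "subtract the mean, apply Poincaré/\Cref{lemma-A17}" trick that gave the second inequality of \Cref{sec2:face} is unavailable, so one must either (i) import the extra logarithmic factor through a discrete $L^\infty$ bound on $v_i$ restricted to the face, or (ii) exploit that the edge-average constraints already force a shared constant along $\partial F_{ij}$, so that after subtracting that shared edge average the difference $(u_i)_i - (u_j)_i$ has a controlled $L^2$ norm on $F_{ij}$ by \Cref{sec2:edge-face} (bounding an edge-centered deviation) combined with a face-from-boundary trace estimate. Care is needed that the constant subtracted is genuinely the same on both sides — this is where the $\widetilde{W}_{VE}$ structure is used — and that the cut-off $\Theta_{F_{ij}}$ contributes at most the two logarithmic powers recorded in \Cref{sec2:face}. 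Once the face term is bounded by $(1 + \log\frac{H}{h})^2(\frac{1}{\tau}|u_i|_{S_i'}^2 + \frac{1}{\tau}|u_j|_{S_j'}^2)$, summing over all substructures, faces, and edges and combining with \eqref{C_m-ineq} and the edge bound \eqref{edge-ineq} yields $|P_Du|_{S'}^2 \lesssim (1 + \log\frac{H}{h})^2 |u|_{S'}^2$, completing the proof.
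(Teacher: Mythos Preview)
Your proposal is essentially correct and route (ii) is precisely the paper's approach. The paper makes the decomposition more explicit: it still subtracts the (now distinct) face averages on each side, writing $(u_i)_i-(u_j)_i = [(u_i)_i-(\overline u_i)_{i,F_{ij}}]-[(u_j)_i-(\overline u_j)_{i,F_{ij}}]+[(\overline u_i)_{i,F_{ij}}-(\overline u_j)_{i,F_{ij}}]$, handles the first two terms exactly as in \eqref{face-ineq} via the second inequality of \Cref{sec2:face}, and then bounds the residual constant $|(\overline u_i)_{i,F_{ij}}-(\overline u_j)_{i,F_{ij}}|^2$ by inserting the shared edge average $(\overline u_i)_{i,\E}=(\overline u_j)_{i,\E}$ and applying \Cref{edge-average}, \Cref{sec2:edge-face}, and \Cref{lemma-A17} in turn; combined with the first inequality of \Cref{sec2:face} for $\|\Theta_{F_{ij}}\|^2_{H^{1/2}_{00}}$, this gives the stated $(1+\log\frac{H}{h})^2$ factor.

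Your route (i) is less clean than you suggest: the pointwise inequality $|I^h(\Theta_{F_{ij}} w)|\leq \|w\|_{L^\infty}\Theta_{F_{ij}}$ does not directly control the $H^{1/2}_{00}$ (semi)norm, and a discrete Sobolev bound on the face produces the full $H^{1/2}$ norm of $w=(u_i)_i-(u_j)_i$ rather than the seminorm, so you would still need to subtract a shared constant to close the estimate --- which is route (ii) again.
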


\begin{proof}
    We proceed just as in the proof of \Cref{lemma-full-primal} for edge terms. For face terms, the averages $(\overline u_i)_{i, F_{ij}}$ and $(\overline u_j)_{i, F_{ij}}$ will now generally be different. Therefore, following the argumentation of Lemma 6.36 in \cite{dd-book} we reach that
    \begin{equation*}
    \label{face-ineq-no-av}
        \begin{split}
            & \sigma_i (\delta_j^{\dag})^2 \|I^h\Theta_{F_{ij}}((u_i)_i - (u_j)_i)\|_{H_{00}^{1 / 2}(F_{ij})}^2 \\
            & = \sigma_i (\delta_j^{\dag})^2 \|I^h\Theta_{F_{ij}}(((u_i)_i - (\overline u_i)_{i, F_{ij}}) - ((u_j)_i - (\overline u_j)_{i, F_{ij}}) \\
            & \quad + ((\overline u_i)_{i, F_{ij}} - (\overline u_j)_{i, F_{ij}}))\|_{H_{00}^{1 / 2}(F_{ij})}^2 \\
            & \lesssim \sigma_i \|I^h\Theta_{F_{ij}}((u_i)_i - (\overline u_i)_{i, F_{ij}})\|_{H_{00}^{1 / 2}(F_{ij})}^2 + \sigma_j \|I^h\Theta_{F_{ij}}((u_j)_i - (\overline u_j)_{i, F_{ij}})\|_{H_{00}^{1 / 2}(F_{ij})}^2 \\
            & \quad + \min\{\sigma_i, \sigma_j\} \|\Theta_{F_{ij}} ((\overline u_i)_{i, F_{ij}} - (\overline u_j)_{i, F_{ij}})\|^2_{H^{1/2}_{00}(F_{ij})}.
        \end{split}
    \end{equation*}
    The first two terms we can estimate exactly as in (\ref{face-ineq}) by
    \[\big(\sigma_i |(u_i)_i|_{H^{1 / 2}(F_{ij})}^2 + \sigma_j |(u_j)_i)|_{H^{1 / 2}(F_{ij})}^2 \big).\]
    We still have to estimate the third term, $\|\Theta_{F_{ij}} ((\overline u_i)_{i, F_{ij}} - (\overline u_j)_{i, F_{ij}})\|^2_{H^{1/2}_{00}(F_{ij})}$. Since all edge averages are primal, we choose an edge $\E$ of the face $F_{ij}$ and have $(\overline{u}_i)_{i, \E} = (\overline{u}_j)_{i, \E}$. This gives
    \[|(\overline u_i)_{i, F_{ij}} - (\overline u_j)_{i, F_{ij}}|^2 \lesssim |(\overline u_i)_{i, F_{ij}} - (\overline{u}_i)_{i, \E}|^2 + |(\overline u_j)_{i, F_{ij}} - (\overline{u}_j)_{i, \E}|^2.\]
    We only consider the first term and proceed in exactly the same way for the second. Utilizing \Cref{edge-average}, we get
    \[|(\overline u_i)_{i, F_{ij}} - (\overline{u}_i)_{i, \E}|^2 = |(\overline{(\overline u_i)_{i, F_{ij}} - (u_i)_i})_{i, \E}|^2 \lesssim \frac{1}{H}\|(\overline u_i)_{i, F_{ij}} - (u_i)_i\|^2_{L^2(\E)}, \]
    % and \Cref{sec2:edge-face} and Lemma A.17 from \cite{dd-book} now yield
    % \begin{equation}
    %     \begin{split}
    %     |(\overline u_i)_{i, F_{ij}} - (\overline{u}_i)_{i, \E}|^2
    %     & \textcolor{blue}{ \lesssim \frac{1}{H} \bigg(1 + \log \frac{H}{h} \bigg) \|(\overline u_i)_{i, F_{ij}} - (u_i)_i\|^2_{H^{1/2}(F_{ij})}} \\
    %     & \textcolor{blue}{ = \frac{1}{H} \bigg(1 + \log \frac{H}{h} \bigg) (|(\overline u_i)_{i, F_{ij}} - (u_i)_i|^2_{H^{1/2}(F_{ij})}} \\
    %     & \textcolor{blue}{\quad + \|(\overline u_i)_{i, F_{ij}} - (u_i)_i\|^2_{L^2(F_{ij})})} \\
    %     & \textcolor{blue}{\lesssim (1 + \frac{1}{H})\bigg(1 + \log\frac{H}{h}\bigg) |(\overline u_i)_{i, F_{ij}} - (u_i)_i|^2_{H^{1/2}(F_{ij})}}.
    %     \end{split}
    %     \label{edge-minus-face}
    % \end{equation}
    \textcolor{cyan}{
    and \Cref{sec2:edge-face} leads to
    \begin{equation*}
        |(\overline u_i)_{i, F_{ij}} - (\overline{u}_i)_{i, \E}|^2
        \lesssim \frac{1}{H} \bigg(1 + \log \frac{H}{h} \bigg) \|(\overline u_i)_{i, F_{ij}} - (u_i)_i\|^2_{H^{1/2}(F_{ij})}.
    \end{equation*}
    Applying \Cref{lemma-A17}, we ultimately bound the above line with
    \begin{equation}
        \frac{1}{H} \bigg(1 + \log\frac{H}{h}\bigg) |(\overline u_i)_{i, F_{ij}} - (u_i)_i|^2_{H^{1/2}(F_{ij})}
        \label{edge-minus-face}
    \end{equation}
    }
    Combining (\ref{edge-minus-face}) with \Cref{sec2:face} it finally follows that
    \[\|\Theta_{F_{ij}} ((\overline u_i)_{i, F_{ij}} - (\overline u_j)_{i, F_{ij}})\|^2_{H^{1/2}_{00}(F_{ij})} \lesssim \bigg( 1 + \log\frac{H}{h}\bigg)^2\big(|(u_i)_i|^2_{H^{1/2}(F_{ij)}} + |(u_j)_i|^2_{H^{1/2}(F_{ij})}\big)\]
    and we can combine the partial results and follow the rest of the argumentation in the proof of \Cref{lemma-full-primal}.
\end{proof}

With these results, we finally receive the following condition number bounds for the BDDC-preconditioned linear systems which can be proven as in \cite{dd-book}:
\begin{theorem}
    Let $M^{-1}_{BDDC}$ be the BDDC preconditioner built as stated above and $K$ the according linear system. Assume that the jump operator is scaled by the $\rho$-scaling, as considered in \Cref{lemma-full-primal} and \Cref{lem:VE}. Then the condition number $\kappa(M^{-1}K) = \frac{\lambda_{\max}}{\lambda_{\min}}$ is bounded by
    \[\kappa(M^{-1}K) \leq C \bigg(1 + \log\frac{H}{h}\bigg)^2,\]
    where $\lambda_{\max}$ and $\lambda_{\min}$ are the maximum and minimum Eigenvalues of $M^{-1}K$ and $C$ is a constant independent of $h, H$, and the values of $\sigma_i.$ This bound holds both if we choose the primal constraints as in \Cref{lemma-full-primal} and as in \Cref{lem:VE}.
\end{theorem}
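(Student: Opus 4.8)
The plan is to reduce the condition number estimate to the abstract BDDC framework, where the bound is governed by two quantities: the norm of the averaging operator $E_D = I - P_D$ (equivalently $P_D$) measured in the $S'$-seminorm, and a stability constant for the harmonic extension splitting. The key observation is that the condition number of the preconditioned operator $\kappa(M^{-1}_{BDDC}K) = \kappa(M^{-1}_{BDDC}\widehat S_{\Gamma'})$, since static condensation of the interior degrees of freedom does not change the spectrum, and the BDDC preconditioner acts purely on the interface Schur complement.

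First I would invoke the standard abstract BDDC estimate (as in Theorem 6.15 or the surrounding framework in \cite{dd-book}): for a BDDC preconditioner built on the subspace $\widetilde W(\Gamma')$ with the scaled restriction $\widetilde R_{D,\Gamma'}$, the smallest eigenvalue satisfies $\lambda_{\min} \geq 1$ (this follows from the fact that $\widetilde R_{D,\Gamma'}^T \widetilde R_{\Gamma'}$ is a projection and the partition-of-unity property $\sum_i \delta_i^\dagger = 1$ established after \eqref{rho-scaling}), while the largest eigenvalue is bounded by the norm of the jump operator, $\lambda_{\max} \leq \|P_D\|^2_{S'} := \sup_{u \in \widetilde W(\Gamma')} |P_D u|^2_{S'} / |u|^2_{S'}$. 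The argument for $\lambda_{\max}$ uses that any $u_{\Gamma'} \in \widehat W(\Gamma')$ (the fully continuous space) can be lifted to $\widetilde W(\Gamma')$, and the difference between the BDDC-preconditioned action and the identity is controlled by $E_D$ acting on the unassembled space; the telescoping $I = E_D + P_D$ together with the $S'$-orthogonality of the primal/dual splitting then yields the bound.

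Having set up this reduction, I would simply cite \Cref{lemma-full-primal} for the primal space with vertex, edge, and face averages, and \Cref{lem:VE} for the reduced primal space with only vertex and edge averages: both give
\[
|P_D u|^2_{S'} \leq C \bigg(1 + \log\frac{H}{h}\bigg)^2 |u|^2_{S'}
\]
with $C$ independent of $h$, $H$, and the coefficients $\sigma_i$ (the $\rho$-scaling inequality \eqref{sigma-ineq} is precisely what removes the coefficient dependence). Combining $\lambda_{\max} \leq C(1+\log(H/h))^2$ with $\lambda_{\min} \geq 1$ gives $\kappa(M^{-1}_{BDDC}K) = \lambda_{\max}/\lambda_{\min} \leq C(1+\log(H/h))^2$, which is the claimed bound, valid for both choices of primal constraints.

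The main obstacle is not in the final assembly — that is essentially a citation to \cite{dd-book} — but in verifying that the abstract framework genuinely applies here, namely that the discontinuous, multiplicity-carrying EMI function spaces $W(\Omega')$, $\widetilde W(\Omega')$ and the broken interface $\Gamma'$ satisfy the structural hypotheses of the abstract theory: that $\widetilde W(\Gamma') = W_\Delta(\Gamma') \times \widetilde W_\Pi(\Gamma')$ is a genuine direct-sum decomposition respected by $S'$, that the scaled operators $\widetilde R_{D,\Gamma'}$ and $\widetilde R_{\Gamma'}$ interact correctly (in particular $\widetilde R_{\Gamma'}^T \widetilde R_{D,\Gamma'}$ restricted to $\widehat W(\Gamma')$ is the identity), and that the partition of unity property of $\delta_i^\dagger$ holds pointwise on the broken interface with the correct notion of neighbor set $\N_x$. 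These were already checked when constructing the spaces in \Cref{spaces}, so in the write-up I would state explicitly that the framework of \cite{dd-book} applies verbatim once these identifications are made, and that the lower bound $\lambda_{\min} \geq 1$ and the reduction $\lambda_{\max} \leq \|P_D\|^2_{S'}$ carry over without modification.
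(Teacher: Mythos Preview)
Your proposal is correct and follows exactly the approach the paper takes: the paper does not give a detailed proof of this theorem at all, stating only that it ``can be proven as in \cite{dd-book}'' once the jump operator bounds of \Cref{lemma-full-primal} and \Cref{lem:VE} are in hand. Your write-up in fact supplies more of the standard reduction ($\lambda_{\min}\ge 1$ from the partition of unity, $\lambda_{\max}\le \|P_D\|_{S'}^2$) than the paper itself does, and the structural checks you flag as potential obstacles are precisely those implicitly absorbed into the citation of \cite{dd-book}.
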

\section{Numerical Study} \label{results}

We complement the theoretical analysis with experiments. For this, we consider an artificial, repetitive geometry that splits a cube into two subdomains, an intracellular subdomain in the center with connections to the outside via all faces of the cubes and an extracellular domain around it (see~\Cref{fig:mesh}). This way, each intracellular subdomain has both an interface to an extracellular subdomain (via a cell membrane) and to other intracellular subdomains (via gap junctions). For this study, we consider a linear gap junction $\frac{v_{ij}}{R_g}$ where $R_g= 4.5\times 10^{-4}$ for the interfaces between adjacent intracellular subdomains and the Aliev-Panfilov ionic model \cite{aliev} for the gating variables between intra- and extracellular subdomains. If not stated otherwise, the conductivity coefficients $\sigma_i$ are fixed to $3 \frac{\text{mS}}{\text{cm}}$ for intra- and $20 \frac{\text{mS}}{\text{cm}}$ for extra-cellular subdomains. To improve load balancing, we decompose the extracellular domain into subdomains using regular continuous finite elements. On the interfaces between those subdomains, we do not need to consider any discontinuities. We use the BDDC implementation in the software library Ginkgo \cite{ginkgo} as a preconditioner for a Conjugate Gradient (CG) method, where we always choose a zero initial guess. The stopping criterion evaluates the $L^2$-norm of the residual against a preset threshold of $10^{-6}$, except for \Cref{tab:weak_scaling}, where we consider the absolute residual norm of $10^{-8}$. Iteration matrices and related right-hand sides are computed using the finite element library Kaskade 7 \footnote{https://www.zib.de/other/kaskade7/}. The matrices $K_i$ are computed as explained in \cite{2D-proof}. The times reported refer to the solving time of the preconditioned solver. Numerical results are given for the solution of the linear system at time step 0.01 ms.
All tests have been performed on the CPU partition of the EuroHPC machine Karolina\footnote{https://www.it4i.cz/en/infrastructure/karolina} on compute nodes with two AMD Zen 2 EPYC™ 7H12 CPUs, totalling 128 CPU cores and 256 GB of main memory per node. 

\begin{figure}[ht!]
    \centering
    \includegraphics[width=.3\textwidth]{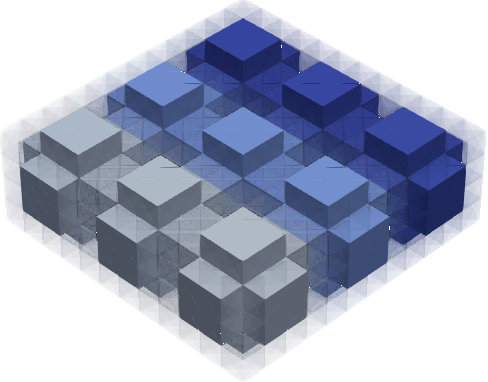}
    \includegraphics[width=.3\textwidth]{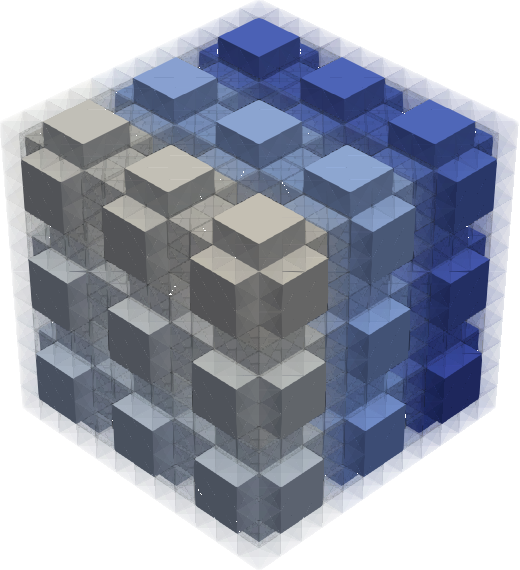}
    \includegraphics[width=.3\textwidth]{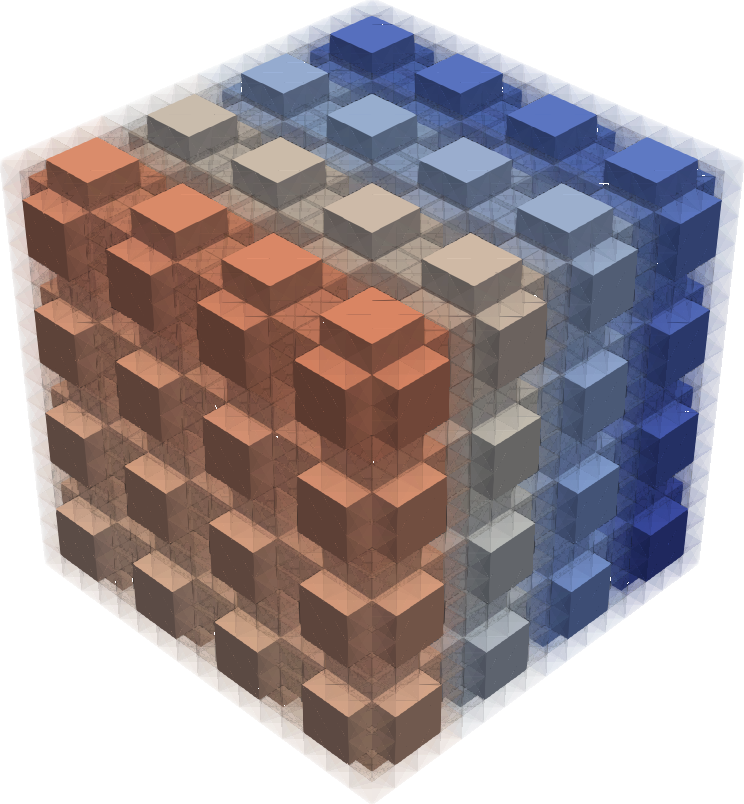}
    \caption{Repetitive test geometry with 3x3x1 (left), 3x3x3 (middle) and 4x4x4 (right) cells. Each cell is an intracellular subdomain inside a cube that can be stacked in all dimensions, resulting in a mesh where all intracellular subdomains have interfaces with extracellular space via a cell membrane model and with other intracellular domains via a linear gap junction.}
    \label{fig:mesh}
\end{figure}

\subsection{Scalability}
For a weak scaling study, we consider the linear system in an EMI model simulation at the same time step (0.01 ms) for a growing number of subdomains, starting with $3\times 3\times 1$ cells and reaching up to $7\times 7\times 7$ cells. The observed convergence behavior aligns with the theory: as we increase the number of subdomains but leave $\frac{H}{h}$ constant, the number of iterations needed to converge as well as the condition number estimate remain roughly constant, see \Cref{tab:weak_scaling}.

\begin{table}[ht]
    \centering
    \begin{tabular}{ccccccccccc}
         \hline 
         \multirow{2}{1cm}{\#cells} & \multirow{2}{0.8cm}{\#SD} & \multirow{2}{1cm}{GD} & \multicolumn{4}{c}{VEF} & \multicolumn{4}{c}{VE} \\
         &&& it & $\kappa$ & CD & time & it & $\kappa$ & CD & time \\\hline
         3x3x1 & 18 & 4493 & 7 & 2.3158 & 73 & 2.9 & 14 & 17.4721 & 40 & 2.5\\
         3x3x2 & 36 & 8718 & 7 & 2.2712 & 207 & 3.3 & 16 & 12.2331 & 123 & 2.8\\
         3x3x3 & 54 & 12943 & 7 & 2.2587 & 341 & 3.8 & 15 & 11.9005 & 206 & 3.3\\
         4x4x4 & 128 & 30209 & 7 & 2.2605 & 919 & 5.4 & 15 & 11.7062 & 567 & 6\\
         5x5x5 & 250 & 58461 & 7 & 2.2602 & 1929 & 5.8 & 15 & 11.3865 & 1204 & 6.2\\
         6x6x6 & 432 & 100405 & 7 & 2.2602 & 3491 & 9.1 & 15 & 11.6549 & 2195 & 6.6\\
         7x7x7 & 686 & 158747 & 7 & 2.2602 & 5725 & 15.6 & 15 & 11.6975 & 3618 & 15.5 \\\hline
    \end{tabular}
    \caption{Weak scalability for an increasing number of cells from $3\times 3\times 1$ to $7\times 7\times 7$. Each cube is discretized with 1024 tetrahedra, i.e. 512 for each intra- and extracellular subdomain. We report the number of subdomains (SD), the global dimension (GD) of the linear problem, the number of preconditioner CG iterations (it), a condition number estimate ($\kappa$) computed with the Lanczos estimate, the dimension of the coarse problem (CD), and the time needed for a preconditioner application in ms. The stopping criterion tolerance for this test is a residual norm of $10^{-8}$.}
    \label{tab:weak_scaling}
\end{table}

To evaluate the stability of the method, we consider a setup where we run the solver with random right-hand-side vectors, in order to confirm that (as expected from the theory) the Krylov method convergence is independent of the right-hand side and initial guess. For this study, we generate 100 different right-hand-side vectors filled with random values in $(-1,1)$ for each of the test cases and record the iteration count needed to converge to a relative residual norm tolerance of $10^{-6}$. 
\Cref{fig:rand_rhs} confirms the expectation that the choice of right-hand side does not impact the convergence of our method significantly. In terms of compute time, we see two major jumps: the first when inter-node communication over the network is needed starting at 128 subdomains, and the second when the solution of the coarse problem starts dominating the runtime. The latter effect can possibly be alleviated by employing a second level of BDDC on the coarse problem to improve scalability.

\begin{figure}
    \centering
    \begin{subfigure}[b]{0.4\textwidth}
        \includegraphics[width=\textwidth]{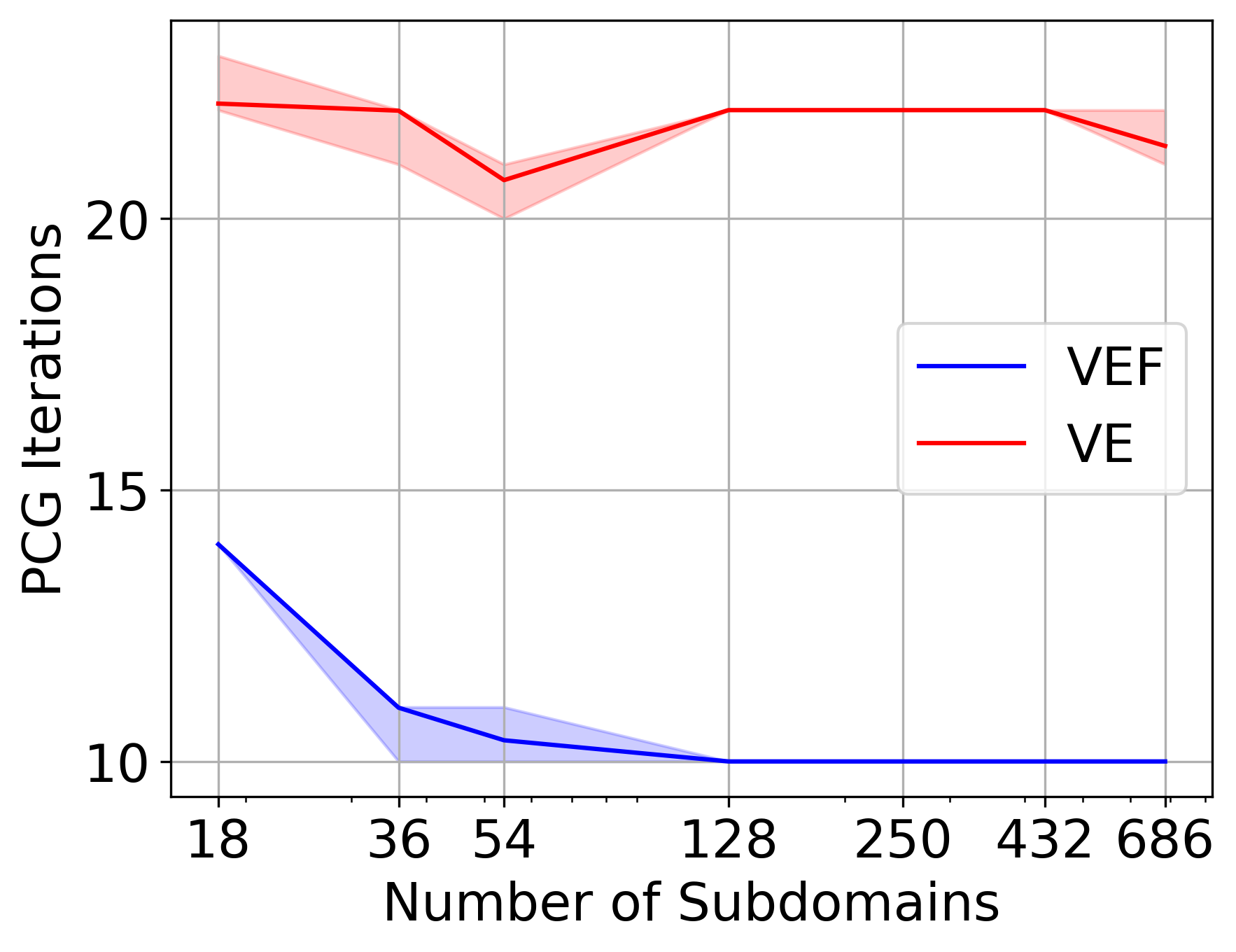}
    \end{subfigure}
    \begin{subfigure}[b]{0.4\textwidth}
        \includegraphics[width=\textwidth]{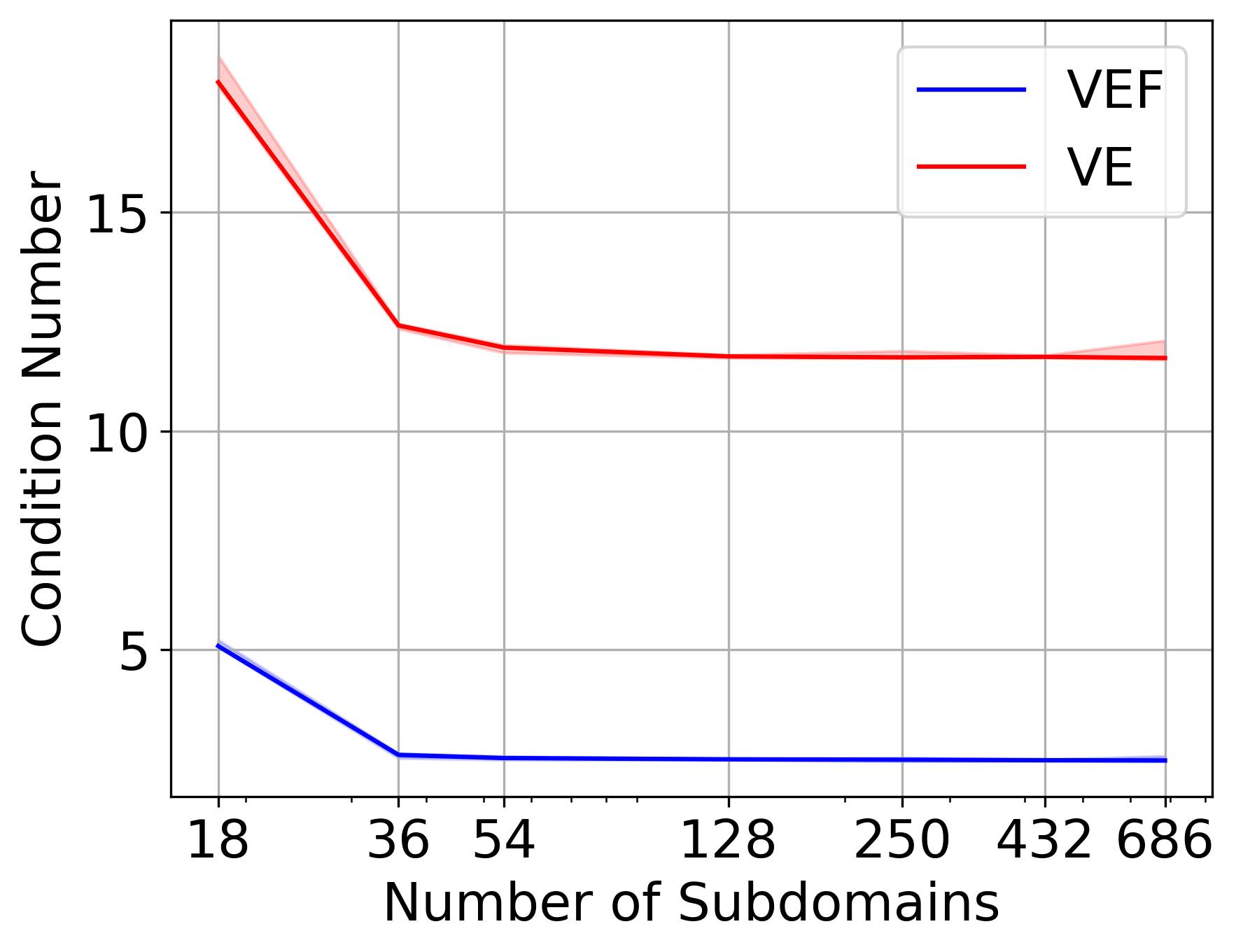}
    \end{subfigure}
    \caption{Iterations needed to converge to a relative residual tolerance of $10^{-6}$ (left) and condition number estimates (right) for random right-hand side vectors. The results colored in blue consider a full primal space containing vertex values as well as edge and face averages. The results colored in red consider only vertex values and edge averages in the primal space. The solid lines show the mean over 100 different random right-hand sides, the colored areas represent the range of iterations or condition numbers for each test case, respectively.}
    \label{fig:rand_rhs}
\end{figure}

\subsection{Robustness w.r.t.\ conductivity coefficients}
The theoretical results obtained in \Cref{sec:proof} reveal that the condition number of the preconditioned operator is bounded independently of the conductivity coefficients $\sigma_i$. In order to evaluate this experimentally, we show in \Cref{fig:rand_sigma} the convergence behavior for random conductivity coefficients in the extracellular and intracellular subdomains. As the extracellular subdomains together represent one continuous space, we assign the same coefficient to all of them, while each of the intracellular subdomains is assigned a random conductivity coefficient, resulting in constant conductivity coefficients within each subdomain. One can observe the same behavior as for fixed $\sigma_i$ with a slightly wider range of needed iterations.

\begin{figure}
    \centering
    \begin{subfigure}[b]{0.4\textwidth}
        \includegraphics[width=\textwidth]{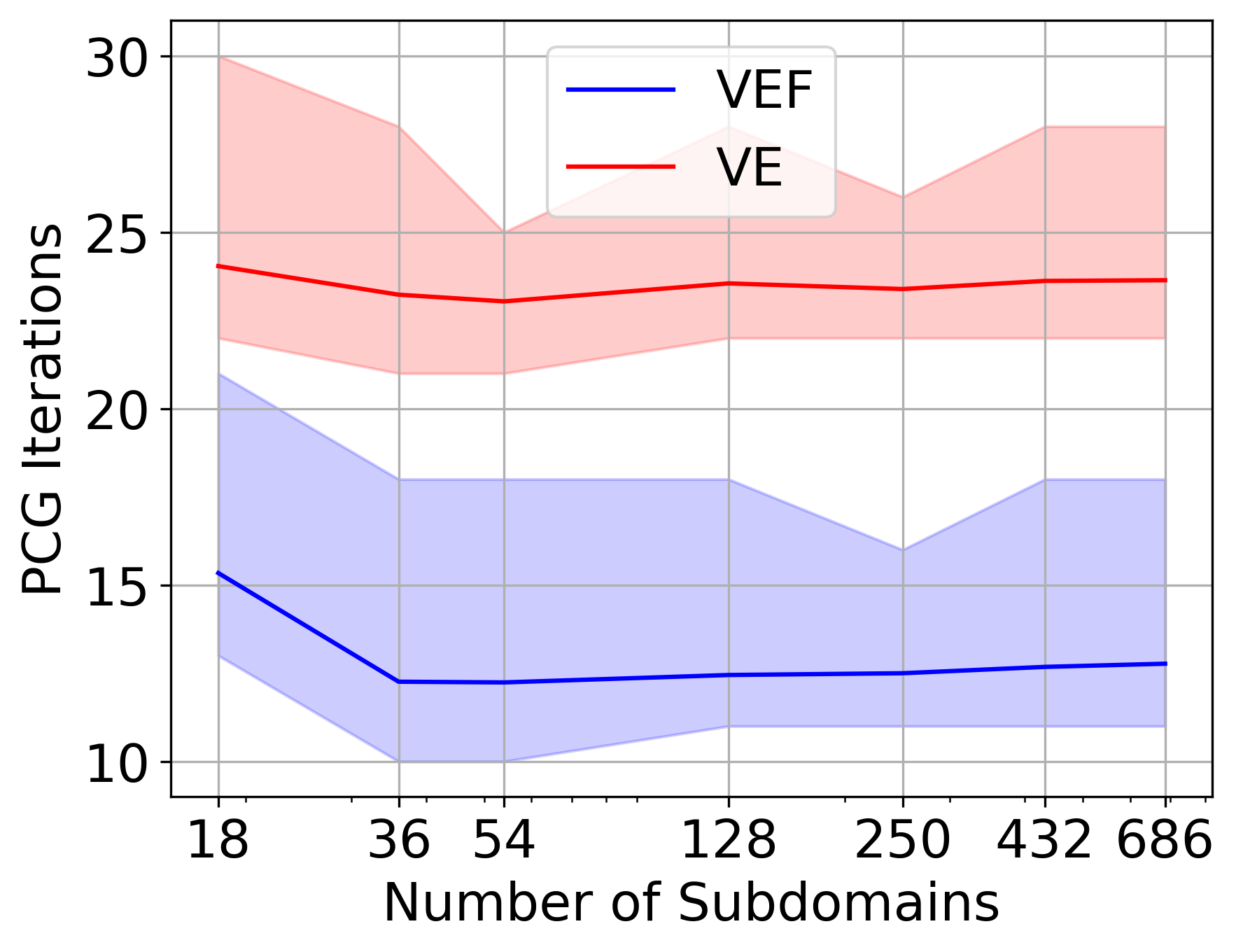}
    \end{subfigure}
    \begin{subfigure}[b]{0.4\textwidth}
        \includegraphics[width=\textwidth]{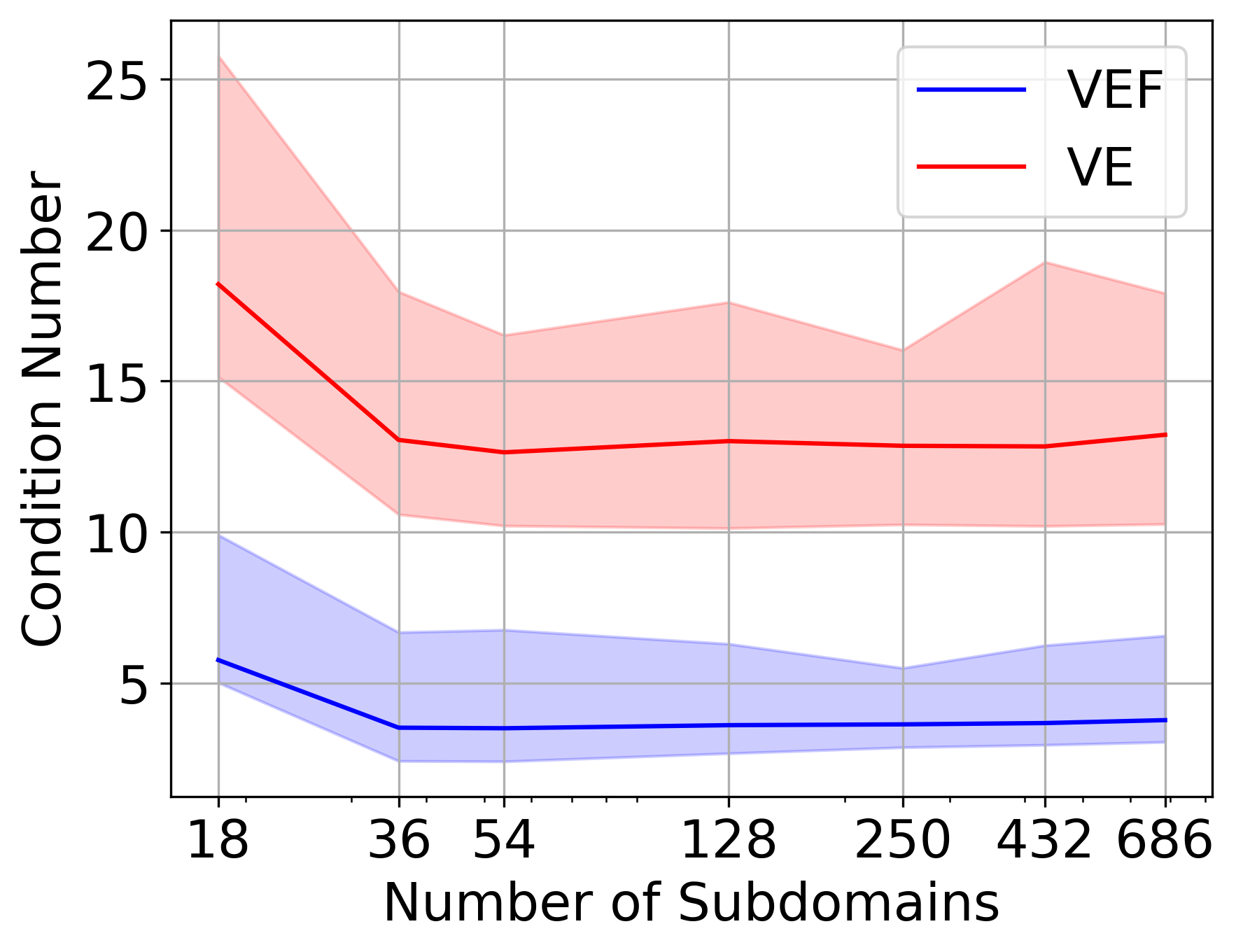}
    \end{subfigure}
    \caption{Iterations needed to converge to a relative residual tolerance of $10^{-6}$ (left) and condition number estimates (right) for random conductivity coefficients \newline $\sigma_i \in (1, 20) \frac{\text{mS}}{\text{cm}}$. For each test case, we generate the preconditioner 100 times with different, random conductivity coefficients, the solid lines show the mean of the iterations needed to converge and the condition number estimates.}
    \label{fig:rand_sigma}
\end{figure}

\subsection{Optimality tests}
In this section, we evaluate the poly-logarithmic convergence behavior of the preconditioned linear operator. For this, we refine a mesh of $3\times 3\times 3$ cells in order to increase the value of $\frac{H}{h}$. In each refinement level, the number of degrees of freedom doubles in each dimension, so $\frac{H}{h}$ also increases by a factor of 2. \Cref{fig:refinement} reveals that the iteration count increases as expected. From the theoretical results, we expect the condition number to grow asymptotically with $(1 + \log\frac{H}{h})^2$ (dashed blue line).

The estimated condition number grow aligns with this expectation. For the reduced coarse space only consisting of edge and vertex constraints, the condition number growth aligns more with linear growth (red dotted line), while flattening out with increasing refinement level.
This linear dependency on $H/h$ instead of the polylogarithmic one could be explained by the fact that we might not be in the asymptotic range for the reduced coarse space, and by the highly non-convex shape of the substructures, in particular the extracellular domain, which could impact on the convergence.

To verify our assumption, we simplify our non-convex geometry to the
% As our repetitive mesh is constructed out of cells with a potentially problematic geometry (they are not convex), we simplify the geometry to the 
meshes shown in \Cref{fig:simple-meshes}. This way, we intend to observe the asymptotic behavior of the condition number also for the reduced coarse space in \Cref{fig:refinement-simple}. We note that while the estimated condition number of the preconditioned operator reflects the theoretical asymptotic bounds perfectly for the full coarse space and for the reduced coarse space, this is only obvious for the simplest mesh. The actual convergence behavior by iterations reflects poly-logarithmic growth in all cases.

\begin{figure}
    \centering
    \begin{subfigure}[b]{0.4\textwidth}
        \includegraphics[width=\textwidth]{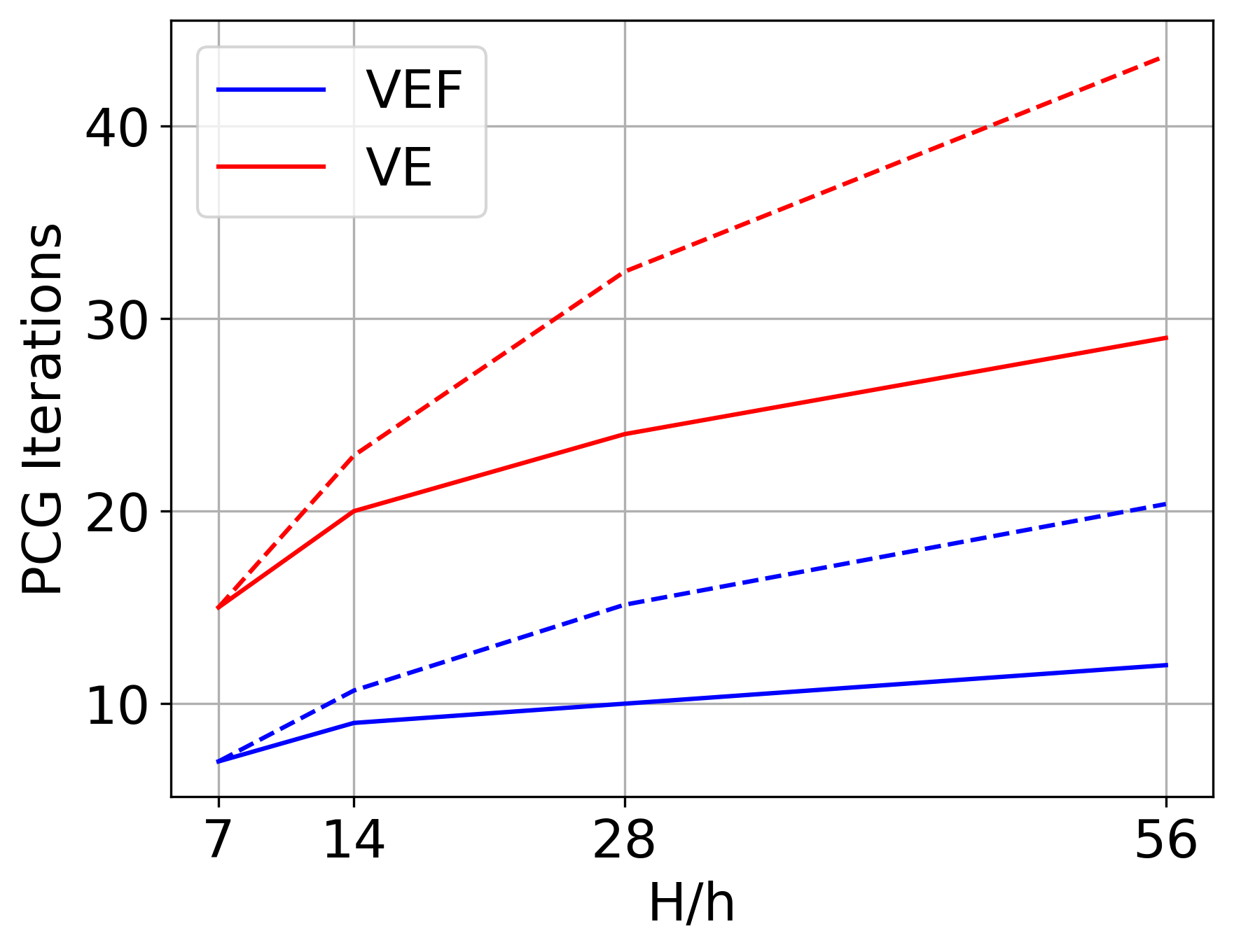}
    \end{subfigure}
    % \begin{subfigure}[b]{0.4\textwidth}
    %     \includegraphics[width=\textwidth]{}
    % \end{subfigure}
    \begin{subfigure}[b]{0.4\textwidth}
        \includegraphics[width=\textwidth]{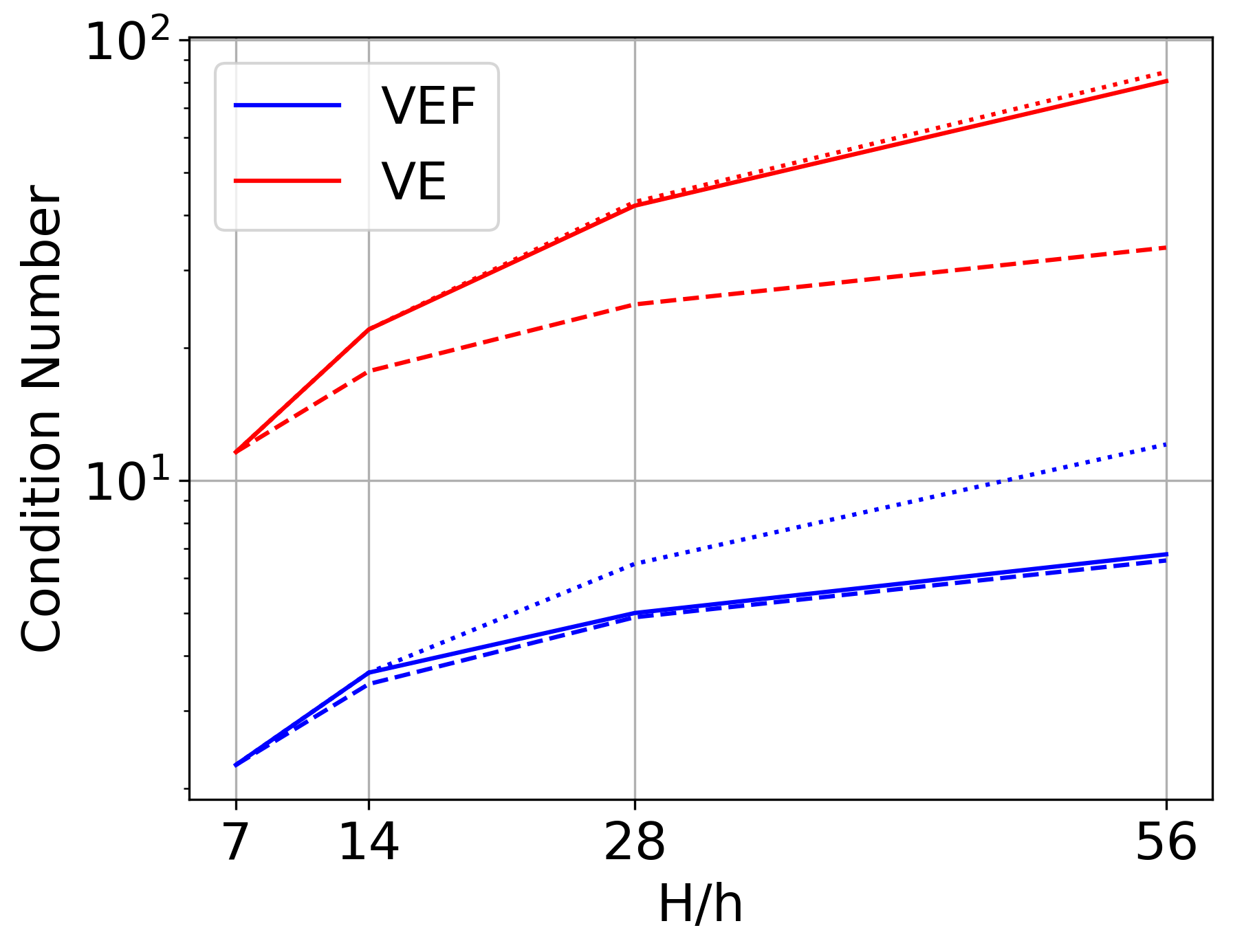}
    \end{subfigure}
    \caption{Iterations needed to converge to a relative residual tolerance of $10^{-6}$ (left) and condition number estimates (right) for an increasing refinement level. Solid lines represent the obtained numerical results. Refining the problem increases $\frac{H}{h}$, resulting in poly-logarithmic increase in the condition number and in the iteration count (dashed lines). For the coarse space containing only vertex and edge constraints, the actual growth of the condition number appears to be closer to linear (dotted line). Here, the dashed line is the graph of $\bigg(1 + \log\frac{H}{h}\bigg)^2$ scaled such that it intersects with the first measured data point and the dotted line is a linear interpolation of the first two measured data points.}
    \label{fig:refinement}
\end{figure}

\begin{figure}
    \centering
    \begin{subfigure}[b]{0.4\textwidth}
        \includegraphics[width=\textwidth]{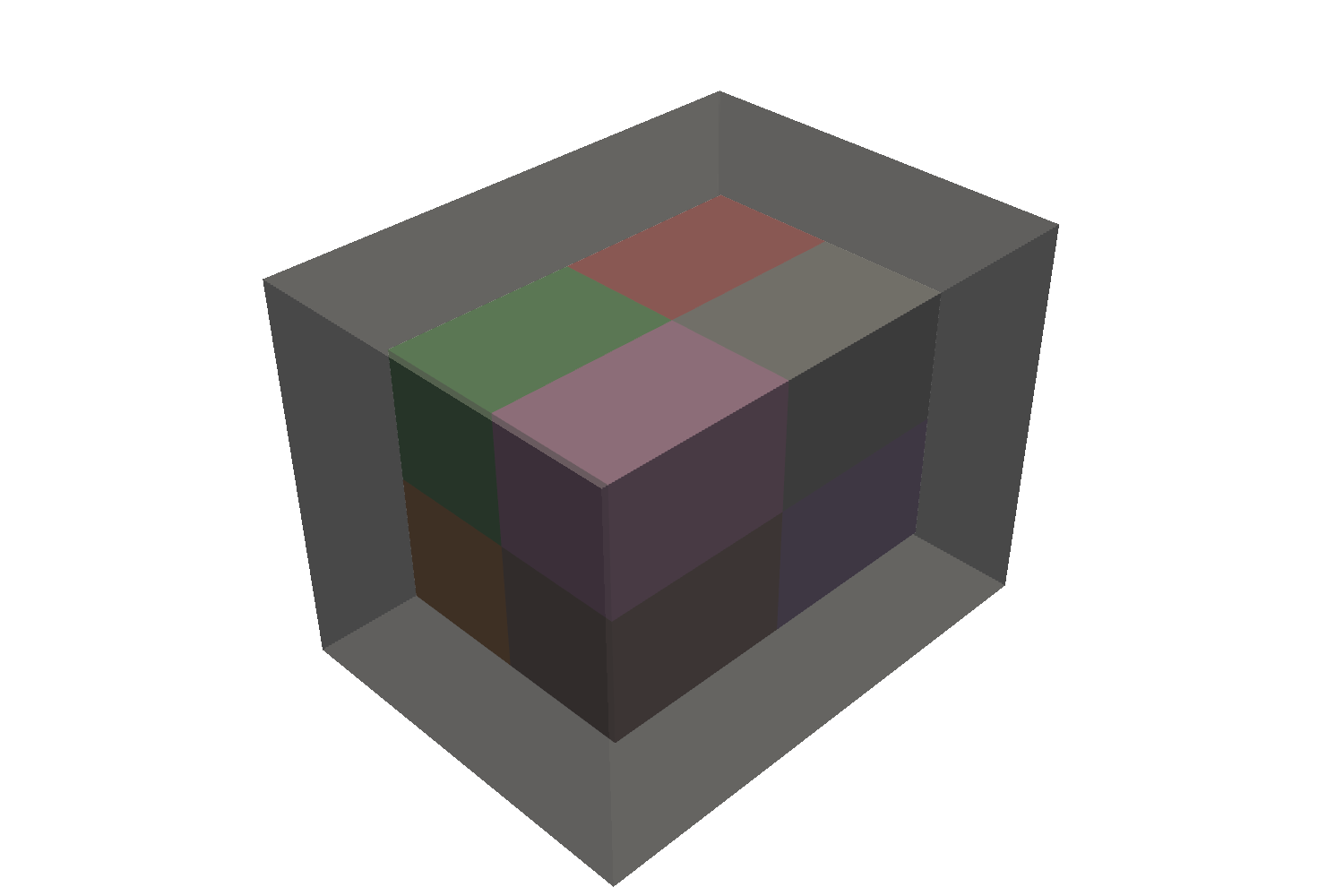}
    \end{subfigure}
    \begin{subfigure}[b]{0.4\textwidth}
        \includegraphics[width=\textwidth]{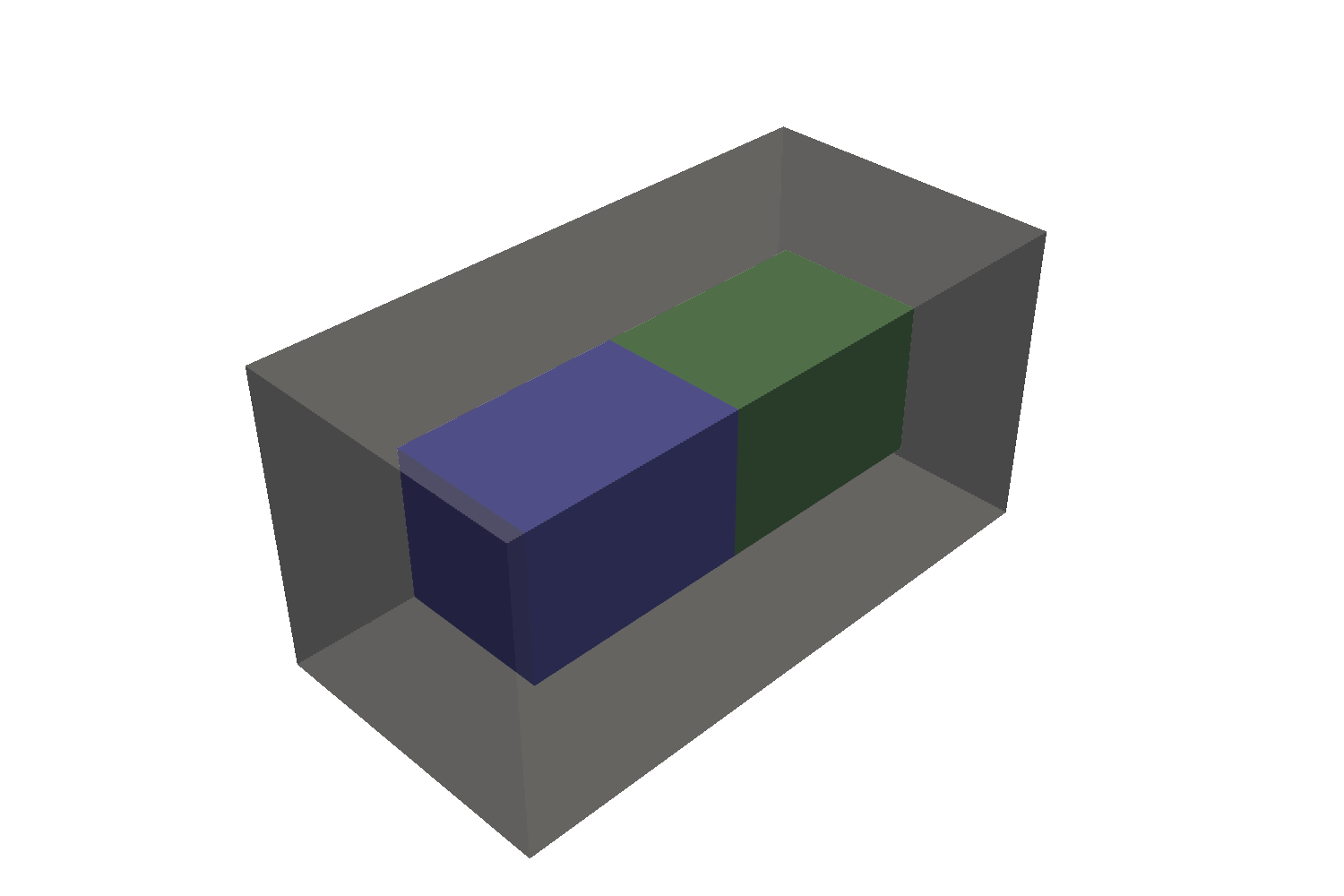}
    \end{subfigure}
    \caption{Simplified meshes containing 8 (left) and 2 (right) convex myocytes floating in extracellular space.}
    \label{fig:simple-meshes}
\end{figure}

\begin{figure}
    \centering
    % \begin{subfigure}[b]{0.4\textwidth}
    %     \includegraphics[width=\textwidth]{}
    % \end{subfigure}
    % \begin{subfigure}[b]{0.4\textwidth}
    %     \includegraphics[width=\textwidth]{}
    % \end{subfigure}
    \begin{subfigure}[b]{0.4\textwidth}
        \includegraphics[width=\textwidth]{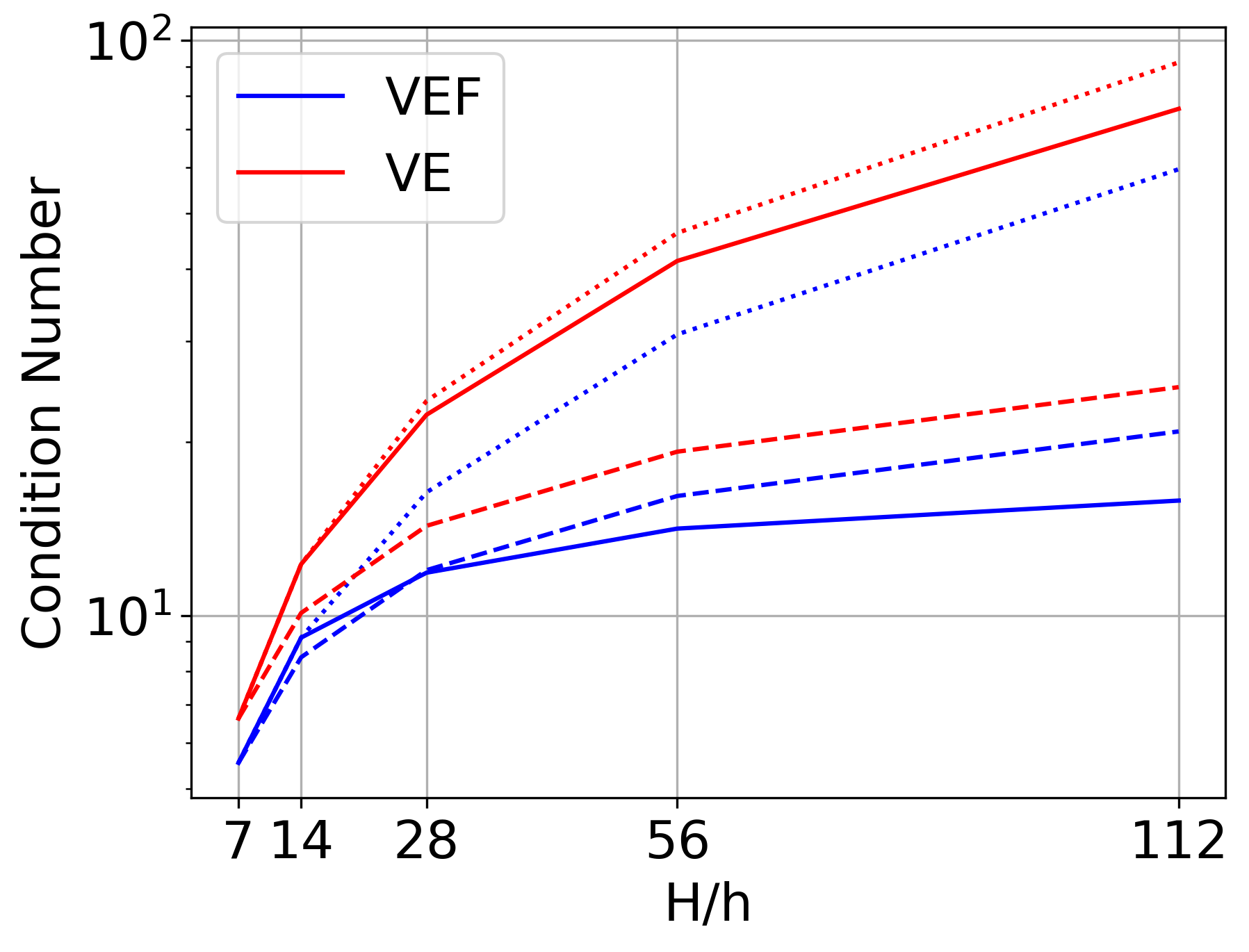}
    \end{subfigure}
    \begin{subfigure}[b]{0.4\textwidth}
        \includegraphics[width=\textwidth]{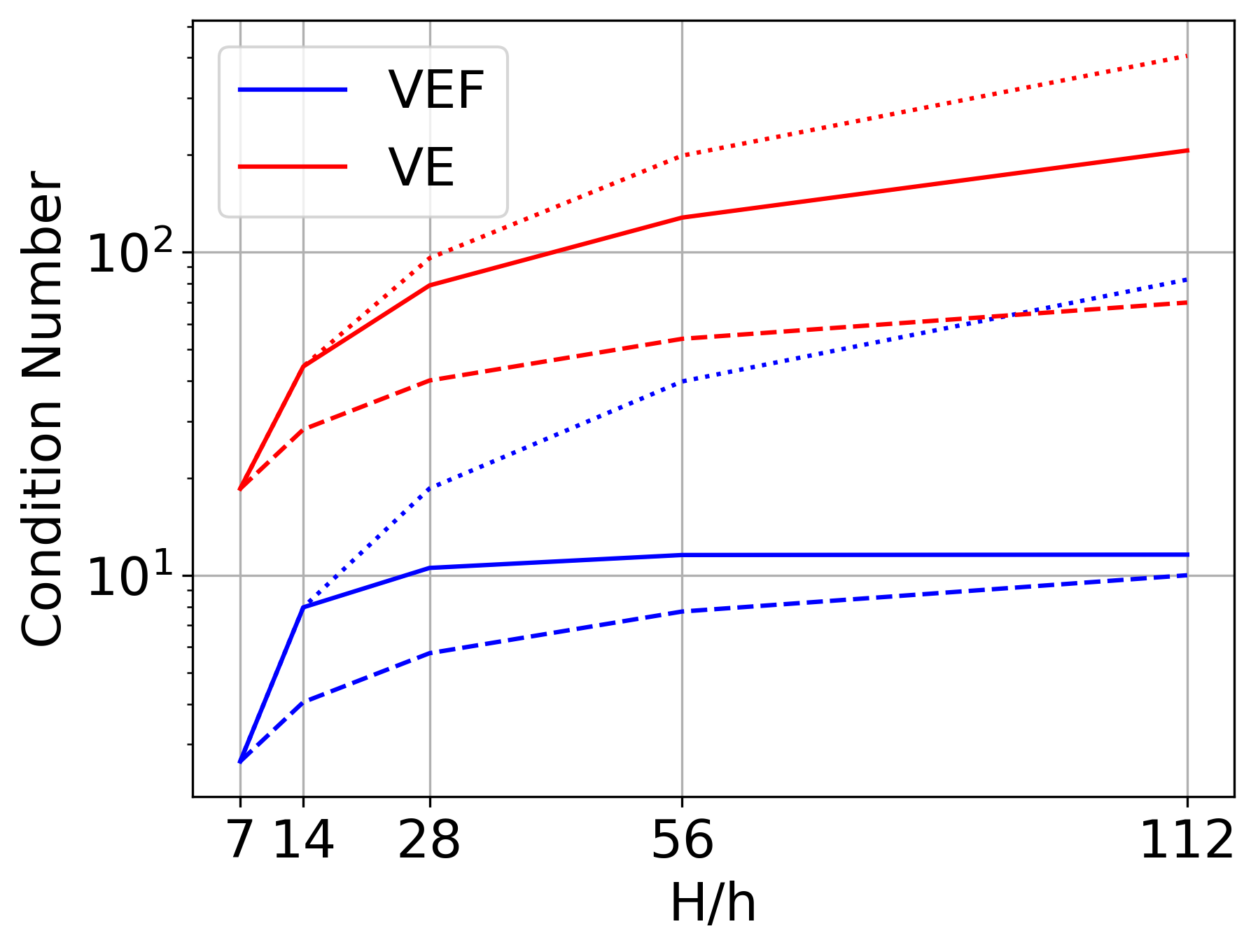}
    \end{subfigure}
    \caption{Condition number estimates for increasing refinement levels of simple geometries as shown in \Cref{fig:simple-meshes}. Solid lines represent the obtained numerical results. For the mesh made up from 8 cells (left), we already see a stronger sub-linear trend than for our repetitive test geometry while for the simplest case containing only two convex cells (right), the behavior shows a poly-logarithmic trend (dashed lines), aligning with the theoretical results. For the coarse space containing only vertex and edge constraints, the actual growth of the condition number appears to be closer to linear (dotted line). We note that for the simplest geometry, the condition number estimates are generally higher than for the more complex geometries, which is most likely due to the complete lack of primal (vertex) constraints.}
    \label{fig:refinement-simple}
\end{figure}

\section{Conclusion}

In this paper, we have constructed and analyzed the Balancing Domain Decomposition by Constraints (BDDC) preconditioner for composite DG-type discretizations of cardiac cell-by-cell models in three spatial dimensions. We have derived theoretical convergence results and used an experimental evaluation to confirm these bounds for a synthetic test case with a repetitive geometry. We also demonstrated the scalability and quasi-optimality of the preconditioner and the convergence being independent of conductivity coefficients. 
Since we mostly focused on the theoretical analysis and numerical convergence, we have not included very detailed profiling results, which could be interesting for a potential follow-up on a multilevel or approximate BDDC study.
Future research will include the integration of the method into large-scale cardiac simulations, moving further away from the regularly shaped geometries considered by the theory and towards more organically, irregularly shaped myocytes as they would appear in the human heart.
\section{Acknowledgements}
% MICROCARD
This work was supported by the European High-Performance Computing Joint Undertaking EuroHPC under grant agreement No 955495 (MICROCARD) co-funded by the Horizon 2020 programme of the European Union (EU), the French National Research Agency ANR, the German Federal Ministry of Education and Research, the Italian ministry of economic development, the Swiss State Secretariat for Education, Research and Innovation, the Austrian Research Promotion Agency FFG, and the Research Council of Norway. \newline
% MICROCARD 2
This work was supported by the MICROCARD-2 project (project ID 101172576).
The project is supported by the EuroHPC Joint Undertaking and its
members (including top-up funding by ANR, BMBF, and Ministero dello
sviluppo economico).
Funded by the European Union. Views and opinions expressed are
however those of the author(s) only and do not necessarily reflect
those of the European Union or EuroHPC.  Neither the European Union
nor EuroHPC can be held responsible for them. \newline
The authors acknowledge the EuroHPC award for the availability of high-performance computing resources on Karolina (EU2024D09-042).
N.M.M.H., L.F.P. and S.S. have been supported by grants of Istituto Nazionale di Alta Matematica (INDAM-GNCS) and by MUR (PRIN 202232A8AN\_002, PRIN 202232A8AN\_003, PRIN P2022B38NR\_001 and PRIN P2022B38NR\_002) funded by European Union - Next Generation EU.

\bibliography{main}
\end{document}